\newtheorem{theorem}{Theorem}[section]
\newtheorem{lemma}{Lemma}[section]
\newtheorem{proposition}{Proposition}[section]
\newtheorem{corollary}{Corollary}[section]
\newcommand{\ve}{\varepsilon}
\theoremstyle{definition}
\newtheorem{definition}{Definition}[section]
\theoremstyle{remark}
\newtheorem{remark}{Remark}[section]
\newtheorem{remarks}{Remarks}[section]
\newtheorem{example}{Example}[section]
\title{The transport speed and optimal work in pulsating Frenkel-Kontorova models}
\author{Braslav Rabar and Sini\v{s}a Slijep\v{c}evi\'{c}}
\address{ 
	Department of Mathematics, Bijeni\v{c}ka 30, Zagreb, Croatia
}
\email{brabar@math.hr,slijepce@math.hr}
\keywords{Frenkel-Kontovora model, Transport processes, Ratchet dynamics, Pumping, Attractor, Invariant measures}
\begin{document}

\begin{abstract}
We consider a generalized one-dimensional chain in a periodic potential (the Frenkel-Kontorova model), with dissipative, pulsating (or ratchet) dynamics as a model of transport when the average force on the system is zero. We find lower bounds on the transport speed under mild assumptions on the asymmetry and steepness of the site potential.  Physically relevant applications include explicit estimates of the pulse frequencies and mean spacings for which the transport is non-zero, and more specifically the pulse frequencies which maximize work. The bounds explicitly depend on the pulse period and subtle number-theoretical properties of the mean spacing. The main tool is the study of time evolution of spatially invariant measures in the space of measures equipped with the $L^1$-Wasserstein metric.
\end{abstract}

\maketitle

\section{Introduction.}

Our main motivation is to analyse the transport in spatially periodic systems far from equilibria, in the cases when there is no a-priori driving bias in any direction. Relevant physical examples range from molecular motors and molecular pumps, photovoltaic and photorefractive effects in materials, Josephson-Johnson arrays and many other examples (see \cite{Reimann02} for overview and references).

We consider perhaps the simplest model exhibiting collective ratchet behaviour and enabling rigorous results. The generalized Frenkel-Kontorova model \cite{Aubry83,Baesens05,Floria96} is an one-dimensional chain of particles with neighbouring sites coupled with a convex interaction potential $W$, in a periodical potential $V$. It is given by the formal Hamiltonian
$$ H(u)= \sum_{k= -\infty}^{\infty} \left( W(u_{k+1}-u_k)) - V(u_k) \right),$$
where $u \in \mathbb{R}^{\mathbb{Z}}$ is a {\it configuration} of the chain. The classical Frenkel-Kontorova model is defined with $W(x)=x^2$, $V(x)=k \cos(2 \pi x)$, $k>0$ a parameter. Our standing assumption is that $W,V$ are $C^2$, that $W$ is strictly convex (i.e.\ that $W''$ is positive and bounded away from zero), and that $V(x+1)=V(x)$ for all $x$.

We consider its dissipative (over-damped), {\it pulsating} (or {\it ratchet}) dynamics, with the pulsating potential. The dynamics is given with
\begin{equation}
\frac{d}{dt}u_{k}(t)=W'(u_{k+1}-u_k)-W'(u_k-u_{k-1}) + K(t)V'(u_k), \label{r:ratchet1}
\end{equation} 
where $K(t)$ is always assumed to be piecewise-continuous, bounded, periodic {\it pulse} with period $2\tau$.
We note here that all the results and calculations with straightforward modifications also hold in the case of the pulsating interaction, with the equations of motion
\begin{equation}
\frac{d}{dt}u_{k}(t)=K(t)\left[W'(u_{k+1}-u_k)-W'(u_k-u_{k-1})\right] + V'(u_k). \label{r:ratchet2}
\end{equation} 
Following initial results holding in more general cases, we omit the detailed estimates for (\ref{r:ratchet2}) and discuss only the case (\ref{r:ratchet1}).

Our focus is study of the existence of transport. The main result is an explicit lower bound on the transport speed $v(\rho)$. Here $\rho$ is the mean spacing of a configuration $u$ given with
\begin{equation}
\rho(u)=\lim_{n-m \rightarrow \infty}\frac{u_n-u_m}{n-m}  \label{d:rho}
\end{equation}
(whenever the limit exists) and $v(\rho)$ is defined in Theorem \ref{t:speed}.

We say that the transport exists if $v(\rho)>0$. It has already been shown analytically in \cite{Floria05} and numerically in \cite{Floria02} that transport can exist for models (\ref{r:ratchet1}), (\ref{r:ratchet2}). (Transport can also exist in similar systems with a stochastic force, not studied here - see \cite{Saakian:18,Wang:17} and references therein.) The authors showed in \cite{Floria05} that for a large class of potentials $V$ satisfying certain asymmetry condition, the transport exists in the limit $\tau \rightarrow \infty$. The used method, however, does not provide much information on the transport speed $v(\rho)$, as it considers large $\tau$ and assumes that the model completely relaxes between pulse "switches". (The approach is based on comparison of ground states of the Frenkel-Kontorova model with a given potential $V$, and with $V \equiv 0$, using the tools from Aubry-Mather theory \cite{Aubry83,Bangert88,Katok05,Mather82}.) 

Our approach is dynamical, and gives lower bounds on the transport speed as long as $K \cdot V$ is sufficiently asymmetric and steep, also in the cases when the period of the pulse $\tau$ is finite and we always remain relatively far away from equilibria. We are thus able to estimate minimal $\tau$ so that the transport speed is $>0$, and also heuristically discuss which $\tau$ maximizes $v(\rho)$. The physical meaning of this is determination of the optimal pulse frequency for a given model and mean spacing which optimizes work.

We confirm and refine findings \cite{Floria02,Floria05} that $v(\rho)$ depends non-trivially on number-theoretical properties of the mean spacing $\rho$, and show how it explicitly depends on the sequence of convergents of $\rho$. We also obtain an expansion of the lower bound for a generic mean spacing $\rho$, where generic are those (irrational) $\rho$ for which a version of the central limit theorem for its sequence of continued fraction approximations holds (the Khinchin-L\'{e}vy Theorem \cite{Khinchin97}).

We develop the required theory in several steps. The results rely on rather general results for non-autonomous, periodically driven over-damped Frenkel-Kontorova dynamics, following the approach as in \cite{Baesens98,Qin11,Qin13,Slijepcevic14,Slijepcevic14b}. The lower bound on the transport speed is then obtained in three steps, stated precisely in Section \ref{s:outline}, and then proved in three dedicated sections of the paper. The first two steps result with explicit lower bounds for dynamics in the off- and on-phase, relying on observing evolution of an ensemble of configurations, or more precisely evolution of shift-invariant probability measures on the space of configurations. In the third step, we relate the $L^1$-Wasserstein distance between the distribution of the positions of the configuration sites $\mod 1$ at the end of the off-phase, and the Lebesgue measure on $\mathbb{S}^1$. The number-theoretical properties of the mean spacing emerge as important in this particular step of the calculation.

\section{Statement of the results and optimal work} \label{s:two}

Following \cite{Baesens98,Floria96,Middleton:92}, we say that a configuration $u \in \mathbb{R}^{\mathbb{Z}}$ is of bounded width, if there exists a real number $\rho \in \mathbb{R}$ and a constant $n > 0$, such that for all $j,k \in \mathbb{Z}$, $k\geq 1$, we have
\begin{equation}
|u_{j+k}-u_j - k\rho | \leq n. \label{d:width}
\end{equation}
Then $\rho$ is the mean spacing as defined in (\ref{d:rho}). Without loss of generality, we always choose $n$ to be an integer. We first establish the following general result which introduces the transport speed:
\begin{theorem} \label{t:speed}
	There exists a unique, continuous function $v : \mathbb{R} \rightarrow \mathbb{R}$, called the transport speed, depending on $W,V,K$, such that for any $u^0 \in \mathbb{R}^{\mathbb{Z}}$ of bounded width with the mean spacing $\rho$ and constant $n$ as in (\ref{d:width}), for any $k,m \in \mathbb{Z}$, $m \geq 1$ and any $t_0 \in \mathbb{R}$, if $u(t)$ is the solution of (\ref{r:ratchet1}) or (\ref{r:ratchet2}), $u(t_0)=u^0$, then
	\begin{equation}
	  \left| \frac{u_k(t_0+2m\tau)-u_k(t_0)}{2m\tau} - v(\rho) \right| \leq  \frac{n+2}{2m\tau}. \label{r:five}
	\end{equation}
\end{theorem}

We prove it in Section \ref{s:four}, and give an alternative characterization of transport speed as a mean displacement over one period of the pulse, averaged against a certain measure on configurations corresponding to dynamic ground states. Note that relation (\ref{r:five}) gives an explicit estimate on how quickly the average displacement converges to the transport speed, thus for example useful in numerical simulations.

The main result on the transport speed in the pulsating potential case (\ref{r:ratchet1}) is shown for the following step-function pulse:
let $\kappa \geq 0$, and let $K$ be given with
\begin{equation}
K(t) = \begin{cases} 0, & 2n \tau \leq t < (2n+1) \tau, \: n \in \mathbb{Z}, \\
\kappa & (2n+1) \tau \leq t < (2n+2) \tau, \: n \in \mathbb{Z}.
\end{cases} \label{r:a1}
\end{equation}
Furthermore, assume that there exist $\beta > 0$ and $0 < \alpha < 1/2$ so that for some $a,b$, $1/2+\alpha \leq b-a <1$  and for all  $x \in [a,b]$,  we have
\begin{equation}
\kappa \cdot V'(x) \geq \ \delta^+ + \beta, \label{r:a2}
\end{equation}
where
\begin{equation}
\delta^- = \min \lbrace W''(x), \: x \in [\rho-1,\rho+1] \rbrace, \hspace{5ex} \delta^+ = \max \lbrace W''(x), \: x \in [\rho-1,\rho+1] \rbrace.  \label{d:deltadef}
\end{equation}
The condition (\ref{r:a2}) is the required "steepness and asymmetry" assumption. The physical interpretation of (\ref{r:a2}) is as follows: on an interval of the length at least $\alpha$ larger than the spatial period (normalized to $1$), the potential force on a particle in the on-phase dominates the particle interaction force by at least $\beta$.

We introduce a function depending on number-theoretical properties of $\rho$ and the period $\tau$:
\begin{align}
   d(\rho,\tau) = \inf \left(
    \frac{1}{4q} + \frac{q}{4}|p/q-\rho| + \frac{q}{\sqrt{12}}\cdot \frac{\delta^+}{\left( 2(\delta^-)^3\tau+\delta^+ \delta^- \right)^{1/2}} 
   \right) \wedge 1/4,   \label{d:d}
\end{align}
where infimum goes over all the rationals $p/q \in \mathbb{Q}$, $p,q$ relatively prime, $q \geq 1$  (the symbols $\vee$, $\wedge$ always denote the maximum, respectively minimum of two values). We will see later that $d(\rho,\tau)$ gives an upper bound on a certain distance of the distribution of positions of $u_n$ at the end of the on-phase, and the uniform distribution. The main result of the paper is as follows (we write $v(\rho,\tau)$ to emphasize its dependence on $\tau$):
\begin{theorem} \label{t:main1}
	Assuming (\ref{r:a1}), (\ref{r:a2}), we have
	 \begin{equation}
	 v(\rho,\tau) \geq \frac{1}{\tau} \left( \frac{\alpha}{2} - d(\rho,\tau)^{1/2} +  d(\rho,\tau) -
	  \frac{1}{4} \left[ (1/2 + \alpha -\beta\tau - 2d(\rho,\tau)^{1/2}) \vee 0 \right]^2
	  \right). \label{r:mainpotential}
	 \end{equation}
\end{theorem}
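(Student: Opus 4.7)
My plan is to exploit the ergodic framework of Sections \ref{s:three}--\ref{s:four}: pass to an attracting shift-invariant measure evolution $\mu_t$ on configuration space and write
\[ v(\rho) \;=\; \frac{\kappa}{2\tau}\int_\tau^{2\tau}\mathbb{E}_{\mu_t}[V'(u_0)]\,dt, \]
using that the coupling differences $W'(u_{k+1}-u_k)-W'(u_k-u_{k-1})$ telescope to zero under any shift-invariant expectation. Only the on-phase contributes in expectation, so the whole problem reduces to a lower bound on $\mathbb{E}_{\mu_t}[V'(u_0)]$ for $t\in[\tau,2\tau]$.

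The off-phase ($K\equiv 0$ on $[0,\tau]$) is pure $L^2$-gradient descent of $\sum W(u_{k+1}-u_k)$; the convexity $\delta^-\leq W''\leq\delta^+$ makes it a discrete heat equation that flattens configurations toward $k\rho+a$. Approximating $\rho$ by a convergent $p_n/q_n$ and projecting onto the $q_n$-periodic sector, the longest-wavelength Fourier mode relaxes at rate $\sim\delta^-/q_n^{\,2}$, leaving residual $L^2$-variance $\sim q_n^{\,2}/(\delta^-\tau)$ at time $\tau$; on the other hand the unavoidable approximation defect $|\rho-p_n/q_n|\cdot q_n \leq 1/q_n$ is an independent, $\tau$-insensitive contribution to the error, and minimising the sum of the two over $n$ reproduces the expression (\ref{d:gamma}). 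A Chebyshev/Kantorovich--Rubinstein step then converts this $L^2$-bound into a bound on the Wasserstein-$1$ distance between the law of $u_0\bmod 1$ under $\mu_\tau$ and Lebesgue on $\mathbb{S}^1$, of order $\gamma_{\rho,\tau}$.

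For the on-phase, hypothesis (\ref{r:a2}) together with the synchronization/monotonicity bounds of Section \ref{s:three} (which dominate the coupling contribution using $W''\leq\delta^+$) gives a one-sided bound $\dot u_k\geq\beta$ whenever $u_k\in[a,b]$: mass flows through $[a,b]$ strictly rightward and cannot re-enter once it exits on the right. A worst-case optimisation over measures at $W_1$-distance at most $\gamma_{\rho,\tau}$ from Lebesgue, subject to this monotone transport constraint, produces (\ref{r:mainpotential}): the gain $\alpha/\tau$ is the surplus of $|[a,b]|$ above $1/2$, the corrections $-\gamma_{\rho,\tau}/\tau+\gamma_{\rho,\tau}^{\,2}/(2\tau)$ absorb mass that the $W_1$-slack may place outside $[a,b]$, and the cap $\tfrac{1}{2\tau}[(1/2-\beta\tau-\alpha-\gamma_{\rho,\tau})\vee 0]^2$ activates only when $\beta\tau$ is too small for a particle entering $[a,b]$ on the left to be pushed past $b$ within one on-phase, so that some of the would-be displacement is unrealized. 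I expect the main obstacle to be the off-phase step: extracting a sharp $C_\rho$ in $\gamma_{\rho,\tau}$ requires balancing $\delta^-$ (dissipation rate) against $\delta^+$ (worst-case initial $L^2$-energy) on the $q_n$-periodic subspace and pushing the $q_n$-dependence cleanly through the Kantorovich bound, whereas the on-phase piece is a relatively clean variational optimum once the Wasserstein estimate is in hand.
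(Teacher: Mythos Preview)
Your overall architecture matches the paper exactly: only the on-phase contributes to $\mathbb{E}_{\mu}[\dot u_0]$ (this is Lemma~\ref{l:relax1}); the off-phase is used to control a variance, which is converted into a $W_1$-bound on the law of $u_0\bmod 1$ against Lebesgue; and the on-phase bound comes from a worst-case optimisation of $\int\varphi\,d\nu$ over $\nu$ in a $W_1$-ball about $\lambda$ (Lemma~\ref{l:on3}). The interpretation of the three terms in (\ref{r:mainpotential}) is spot on.

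There is, however, a misplacement in your off-phase plan. You propose to ``project onto the $q_n$-periodic sector'' and track the slowest Fourier mode, obtaining residual variance $\sim q_n^{\,2}/(\delta^-\tau)$. But a synchronized configuration with irrational $\rho$ is \emph{not} $q_n$-periodic, so there is no such sector to project onto, and the relaxation estimate cannot carry a $q_n$. In the paper the off-phase bound is obtained with no reference to convergents at all: one differentiates the Lyapunov functional $W(\mu)=\int W(u_1-u_0)\,d\mu$ along the flow, combines a discrete Poincar\'e inequality (Lemma~\ref{l:relax2}, which uses only that the width is $\leq 1$) with the convexity bounds of Lemma~\ref{l:relax3}, and solves the resulting differential inequality to get $V(\mu(\tau))\leq (\delta^+)^2/\bigl(2(\delta^-)^3\tau\bigr)$, a bound independent of any $q_n$. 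The convergents enter only at the \emph{next} step (Lemma~\ref{l:on2}): to compare the empirical distribution of $u_0,\ldots,u_{q-1}\bmod 1$ with Lebesgue one accumulates increments over $q$ sites, picking up a factor $q$ times the $L^2$ scale $\sqrt{V(\mu(\tau))}\sim 1/\sqrt{\tau}$, plus the Diophantine defect $1/q$. Minimising this sum over convergents gives $\gamma_{\rho,\tau}$ with the stated $C_\rho$. So your two error contributions are right, but their provenance is swapped; executing your Fourier plan literally would stall.

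One further omission: your sketch covers only irrational $\rho$. The paper handles rational $\rho$ by continuity of $v(\rho)$ (Proposition~\ref{p:speed3}) together with upper semi-continuity of $\rho\mapsto\gamma_{\rho,\tau}$ (Lemma~\ref{l:contgamma}), so that the lower bound, being lower semi-continuous where nontrivial, passes to the rational closure.
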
 

\begin{remark} \label{r:tilded}
	One can estimate $d(\rho,\tau)$ and the lower bound (\ref{r:mainpotential}) in the following way: it suffices to insert only the series of convergents $p_n/q_n$ of $\rho$ on the right-hand side of (\ref{d:d}) (they can be obtained by calculating continued fractions \cite{Khinchin97}), and thus get
	\begin{equation}
	\tilde{d}(\rho,\tau) = \min \left(
	\frac{1}{2q_n} + \frac{q_n}{\sqrt{12}}\cdot \frac{\delta^+}{\left( 2(\delta^-)^3\tau+\delta^+ \delta^- \right)^{1/2}}  
	\right) \wedge 1/4, \label{r:td}
	\end{equation} 
as for a convergent, $|p_n/q_n-\rho| \leq 1 / q_n^2$. As the right-hand side of (\ref{r:mainpotential}) is decreasing in $d(\rho,\tau)$, we obtain another lower bound by inserting $\tilde{d}(\rho,\tau)$ instead of $d(\rho,\tau)$ in (\ref{r:mainpotential}).
\end{remark}

We say that transport exists if $v(\rho,\tau) > 0$. The estimate (\ref{r:mainpotential}) allows us to deduce the asymptotic behaviour of $v(\rho,\tau)$ as follows:
\begin{corollary} \label{c:c} Assume (\ref{r:a1}), (\ref{r:a2}). Then:
	
(i) For rational $\rho=p_0/q_0$, $p_0,q_0$ relatively prime, $q_0 \geq 2$, if 
$$ \alpha > \frac{1}{\sqrt{q_0}}-\frac{1}{2q_0}, $$
then there exists $\tau_0$ such that for all $\tau > \tau_0$, transport exists, and
$$
\liminf_{\tau \rightarrow \infty} \frac{2v(\rho,\tau)\tau}{\alpha - 1/\sqrt{q_0}+1/(2q_0)} \geq 1.
$$

(ii) For all irrational $\rho$, transport exists for sufficiently large $\tau$, and
\begin{equation}
v(\rho,\tau_n) \geq \frac{\alpha}{2}\tau_n^{-1} - c \tau_n^{-9/8} + \mathcal{O}\left(\tau_n^{-5/4}\right)
\end{equation}
as $\tau \rightarrow \infty$, where $c= (\delta^+)^{1/4} / \left( 6^{1/8} (\delta^-)^{3/8} \right)$. 

(iii) There exists a subset $R \subset \mathbb{R}$ of full Lebesgue measure, such that for all $\rho \in R$ and all $\varepsilon >0$,
\begin{equation}
v(\rho,\tau) \geq \frac{\alpha}{2}\tau^{-1} - 
c\left( \frac{\gamma_L+1}{2} \right)^{1/2}(1+\varepsilon)\tau^{-9/8}  + \mathcal{O}\left(\tau^{-5/4} \right), \label{r:expansion}
\end{equation}
as $\tau \rightarrow \infty$.
\end{corollary}
Here $ \gamma_L=\exp (\pi^2 / (12 \ln 2))$ is the Khinchin-L\'{e}vy constant \cite{Khinchin97}, and the implicit constant in (\ref{r:expansion}) depends on $\varepsilon, \rho$ (it can be explicitly calculated for large subsets of $\mathbb{R}$, see Remark \ref{r:last} for details).

\begin{example} \label{e} We give a simple example of an application of Theorem \ref{t:main1} and existence of transport for $\tau = 5000$. Let
	$	W(x)=x^2/2$ 
and $V$ be any $C^2$, $1$-periodic function so that on the interval $[0,0.9]$ we have $V'(x) \geq 2$.
Then $\delta^+=\delta^-=1$ and we can choose $\alpha = 0.4$, $\beta =1$ so that (\ref{d:deltadef}) holds. By inserting this and $q_n=13$ in (\ref{r:td}) we obtain $\tilde{d}(\rho,\tau) \lessapprox 0.07598$, thus $-d(\rho,\tau)^{1/2}+d(\rho,\tau) \gtrapprox 0.19967$ for all $\rho$ with $p/13$, $p$ relatively prime with $13$, in its sequence of convergents. As $\alpha/2 = 0.2$, and the last term in (\ref{r:mainpotential}) is dominated by $\beta \tau$ thus $0$, Theorem \ref{t:main1} implies that $v(\rho,\tau) \geq (0.2-0.19967)/5000 \approx 6.6 \cdot 10^{-8}$.

To obtain $v(\rho) > 0$ for smaller $\tau$ we can choose $	W(x)=\delta x^2 /2$ and $ \tau = 5000 / \delta$ as long as $\tau \geq 1$ (so that the last term in (\ref{r:mainpotential}) is $0$) with all the other assumptions the same, as $d(\rho,\tau)$ depends on $\delta \tau$ only.

We discuss in the last section inefficiencies and potential ways to improve the bound (\ref{r:mainpotential}).
\end{example}

\begin{remarks}
(i) Consider the case when $\rho=m$ is an integer. It is known \cite{Floria05} that the transport can not exist	in that case, and can be proved directly by considering dynamics of configurations satisfying $u_k=u_{k-1}+m$ for all $k \in \mathbb{Z}$. Our results are consistent, as by (\ref{d:d}), $d(1,\tau)=1/4$ for all $\tau>0$. As $\alpha <1/2$, the right-hand side of (\ref{r:mainpotential}) is always $<0$.

(ii) The condition in \ref{c:c}, (i) seems close to optimal, as transport can exist for $\alpha$ sufficiently close to $1/2$ whenever $q_0 \geq 2$, which is consistent with findings in \cite{Floria05}. Also, our finding that larger asymmetry $\alpha$ implies existence of transport for $\tau$ large enough for rationals with smaller denominators is consistent with \cite{Floria05}. 

(iii) Existence of transport for irrational $\rho$ and sufficiently large $\tau$ also follow from \cite{Baesens05}. We note that our explicit lower bound on $v(\rho)$ and the expansion of the lower bound on $v(\rho)$ are new.
	
(v) The set $R$ in Corollary \ref{c:c}, (iii) is the set of all the numbers for which the Khinchin-L\'{e}vy Theorem \cite{Khinchin97} on asymptotic behaviour of convergents holds, see Section \ref{s:last}.

(vi) If asymptotic behaviour of the sequence of denominators $q_n$ of convergents of $\rho$ is known, one can evaluate the expansion of the lower bound of $v(\rho)$ analogously to (\ref{r:expansion}). For example, consider the golden mean $\rho_{\text{GM}} = (1 + \sqrt{5})/2$. Then $q_n = ((1+\sqrt{5})/2)^n / \sqrt{5}+ \varepsilon_n$, $\varepsilon_n < 1$, is the $n$-th Fibonacci number. Repeating the steps as in the proof of Corollary \ref{c:c}, (iii) in Section \ref{s:last}, one obtains
$$
v(\rho_{\text{GM}}) \geq \frac{\alpha}{2}\tau^{-1} -  c   \left(\frac{3+\sqrt{5}}{4}\right)^{1/2}(1+\varepsilon) \tau^{-9/8} + \mathcal{O}\left(\tau^{-5/4} \right)
$$
for all $\varepsilon>0$. Note that for $\rho=\rho_{\text{GM}}$, the Khinchin-L\'{e}vy Theorem does not hold, thus the expansion differs from (\ref{r:expansion}).
\end{remarks}

Finally, we comment on the frequency $1/\tau_{\text{max}}$ of the pulse which maximizes work, i.e. which maximizes speed $v(\rho)$, in cases for which transport exists for a given $\rho$ and $\tau$ large enough. It follows from \cite{Baesens05} that $v(\rho) \rightarrow 0$ as $\tau \rightarrow 0$ and as $\tau \rightarrow \infty$, thus $v(\rho)$ has a maximum for a finite $\tau$. By differentiating the first two terms on the right-hand side of (\ref{r:expansion}), we obtain that the pulsating frequency which maximizes work for a generic $\rho \in \mathbb{R}$ is for small $\alpha >0$ likely of the order of magnitude
$$
 \frac{1}{\tau_{\text{max}}} \sim c^{-8}\alpha^8,
$$
where $c$ is as in Corollary \ref{c:c}. It would be interesting to investigate it further either numerically, or by a more precise analytical method.

\section{Outline of the paper} \label{s:outline}

We introduce here the main definitions and the notation, and explain the three steps to obtain the main result (\ref{r:mainpotential}), stated explicitly as propositions below.

We will always consider the dynamics of (\ref{r:ratchet1}) on the phase space of configurations $\tilde{\mathcal{X}}$ of the bounded spacing, that is the configurations $ u \in \mathbb{R}^\mathbb{Z}$ such that
$$ \sup_{k \in \mathbb{Z}} |u_k - u_{k+1}| < \infty.$$
We will denote by ${\mathcal{X}}$ the quotient space of $\tilde{\mathcal{X}}$, where we identify $u$ and $u+n$ for all integers $n$. We always assume the product topology on $\tilde{\mathcal{X}}$ (i.e.\ the topology of pointwise convergence), and the topology on $\mathcal{X}$ induced from $\tilde{\mathcal{X}}$. 

Our approach focuses on considering dynamics of particular solutions of (\ref{r:ratchet1}), namely of synchronized solutions. These are solutions $u(t)$ such that $u(t)$ does not intersect with either any spatial translate of $u(t)$, or $u(t+2n\tau)$ for any integer $n$ (see Definition \ref{d:synorbit}). More generally, we consider evolution of a particular probability measure $\mu^0$ on $\mathcal{X}$ supported on synchronized solutions. A precise definition of a synchronized measure, and a proof of its existence for any rotation number $\rho$, is given in Section \ref{s:four}. We denote this evolution by $\mu(t)$. 

The reason for this approach is as follows. In most rigorous approaches to studying dynamics of Frenkel-Kontorova models, one has to frequently spatially average certain quantities. In our approach, this is replaced by averaging these quantities with respect to $\mu(t)$. For example, if $\mu$ is a spatially ergodic probability measure on $\mathcal{X}$ (i.e. ergodic with respect to the spatial translation $S : \mathcal{X} \rightarrow \mathcal{X}$, $(Su)_k=u_{k-1}$), then by the Birkhoff ergodic theorem, for $\mu$-a.e. configuration $u$, its mean spacing is given with
\begin{equation}
\rho(u) = \int_{\mathcal{X}} \left( u_1-u_0 \right) d\mu. \label{d:rotation}
\end{equation}
We frequently denote the right-hand side of (\ref{d:rotation}) by $\hat{\rho}(\mu)$. (The 'hat' symbol will always denote a function whose argument is a probability measure on $\mathcal{X}$.)

\begin{remark} {\bf On integration on $\mathcal{X}$}. As $\mathcal{X}$ is the quotient space of configurations, one must verify that the integrals such as in (\ref{d:rotation}) are well-defined, i.e. that the integrand is independent of the choice of the integer $n$ in the representation $u+n$, $u \in \tilde{\mathcal{X}}$. We frequently use the fact that the following functions are well defined on $\mathcal{X}$ for all $k \in \mathbb{Z}$: $u_k \mod 1$, $u_k-u_{k-1}$, $u_{k+1}-2u_k+u_{k-1}$, $V'(u_k)$, $du_k(t)/dt$. In addition, 
$u_k(t_2)-u_k(t_1)$ is also well defined for any $k \in \mathbb{Z}$, $t_1,t_2 \geq0$, as we always assume choosing consistent representations of $u(t)$, i.e.\ such that that the representation of $u(t_2)$ is the time-evolution of the representation of $u(t_1)$.
	
\end{remark}

We will see that in this setting, it suffices to consider the average (temporal) displacement in the off-phase, which can be expressed as
$$
D_{\text{off}}(\rho) = \int_{\mathcal{X}} \left(u_0(\tau)-u_0(0) \right) d\mu^0,
$$
and in the on-phase,
$$
D_{\text{on}}(\rho) = \int_{\mathcal{X}} \left(u_0(2 \tau)-u_0(\tau) \right) d\mu^0.
$$

In Sections \ref{s:three} and \ref{s:four} we prove the required results on synchronized solutions and synchronized measures, prove Theorem \ref{t:speed}, and establish that
\begin{equation} \label{r:vformula}
v(\rho) = \frac{1}{2\tau} \left( D_{\text{off}}(\rho) + D_{\text{on}}(\rho) \right).
\end{equation}

While the mean spacing is the average (expectation) of the spatial displacement $u_1-u_0$, we will also find the standard deviation of the spatial displacement useful. We denote it by
\begin{equation} 
\hat{\Delta}(\mu)= \left( \int_{\mathcal{X}} (u_1-u_0-\hat{\rho}(\mu))^2 d\mu \right)^{1/2}. \label{r:off}
\end{equation} 
In Section \ref{s:five}, we consider dynamics in the off-phase, and prove the following:
\begin{proposition} \label{p:first} We have that 
\begin{equation}
D_{\text{off}}(\rho) = 0, \label{r:don}
\end{equation}
and 
\begin{equation}
\hat{\Delta}(\mu(\tau)) \leq\frac{\delta^+}{\left( 2(\delta^-)^3\tau+\delta^+ \delta^- \right)^{1/2}}, \label{r:widthestimate}
\end{equation}
where $\delta^-$, $\delta^+$ are as in (\ref{d:deltadef}).
\end{proposition}

For example, in the standard case $W(x)=\frac{1}{2}\delta x^2$, (\ref{r:widthestimate}) becomes $\hat{\Delta}(\mu(\tau)) \leq 1/(2\delta \tau+1)^{1/2}$. 

To obtain the lower bound on $D_{\text{on}}(\rho)$, we introduce two tools. Let $\pi : \mathcal{X} \rightarrow \mathbb{S}^1$ be a projection defined with $\pi(u)= u_0 \text{ mod }1$. Let $\pi^*$ be the associated pushforward of a measure $\mu$ on $\mathcal{X}$ to a measure on $\mathbb{S}^1$. We always use the notation $\mu^*= \pi^*\mu$ for the pushforwarded measure on $\mathbb{S}^1$ (it is well-defined by the definition of $\mathcal{X}$ as a quotient space). Note that, as $\mu$ is $S$-invariant, the definition of the measure $\mu^*$ is independent of the choice of the coordinate at which we project to $\mathbb{S}^1$. (For a comment on close relationship of the projections $\mu(t)^*$ and the dynamical hull function introduced by Qin \cite{Qin13}, see Remark \ref{rm:hull}.)

The second tool we need is the $L^1$-Wasserstein distance on the space of Borel probability measures on $\mathbb{S}^1$. Recall the definition \cite{Ambrosio05, Villani:08}: if $\mu^*$, $\nu^*$ are two Borel-probability measures on $\mathbb{S}^1$, then the $L^1$-Wasserstein distance is defined with
$$ d_1(\mu^*,\nu^*)= \inf_{\zeta \in \Gamma(\mu^*,\nu^*)} \int_{\mathbb{S}^1 \times \mathbb{S}^1} d(x,y)d\zeta(x,y),$$
where $\Gamma(\mu^*,\nu^*)$ is the set of couplings, i.e.\ Borel probability measures on $\mathbb{S}^1 \times \mathbb{S}^1$ whose marginals are $\mu^*$, $\nu^*$ respectively. Now $d_1$ is a metric on the space of Borel-probability measures on $\mathbb{S}^1$.

We show in Section \ref{s:seven} that an upper bound on $d_1(\mu(\tau)^*,\lambda)$ implies a lower bound on $D_{\text{on}}(\rho)$, where $\lambda$ is the Lebesgue measure on $\mathbb{S}^1$:

\begin{proposition} \label{p:second}
Assume that $d_1(\mu(\tau)^*,\lambda) \leq \varepsilon \leq 1/4$. Then
	\begin{equation}
	D_{\text{on}}(\rho) \geq  \dfrac{1}{\tau}\left( \frac{\alpha}{2} -  \varepsilon^{1/2}+\varepsilon -\frac{1}{4}\left[ \left( \frac{1}{2} + \alpha - \beta \tau - 2\varepsilon^{1/2} \right) \vee 0 \right]^2 \right). \label{r:formula}
	\end{equation}
\end{proposition}
Finally, it remains to relate $\hat{\Delta}(\mu(\tau))$ and $d_1(\mu(\tau)^*,\lambda)$. The number-theoretical properties of $\rho$ enter in this step. This is done in Section \ref{s:seven}.

\begin{proposition} \label{p:third} Let $\mu$ be a $S$-invariant probability measure on $\mathcal{X}$. Then for any $p/q \in \mathbb{Q}$, $q \geq 1$, $p,q$ relatively prime, we have that
\begin{equation}
  d_1(\mu^*,\lambda) \leq \left( \frac{1}{4q} + \frac{q}{4}|p/q-\rho| + \frac{q}{\sqrt{12}} \hat{\Delta}(\mu) \right) \wedge \frac{1}{4}.
\end{equation}
\end{proposition}

Theorem \ref{t:main1} now follows easily by combining these three propositions while noting that the right-hand side of (\ref{r:formula}) is decreasing in $\varepsilon$ for $0 \leq \varepsilon \leq 1/4$, and then inserting (\ref{r:don}) and (\ref{r:formula}) into (\ref{r:vformula}). Finally, Corollary \ref{c:c} is proved in Section \ref{s:last}.

\section{Existence of synchronized orbits} \label{s:three}

In this section we focus on existence of synchronized solutions, which play the key role in studying asymptotic behaviour of (\ref{r:ratchet1}), (\ref{r:ratchet2}). The main result is that synchronized solutions with any mean spacing $\rho$ exist. The proof closely follows Wen-Xin Qin \cite{Qin13}, whose approach is extended here to the following family of equations which include (\ref{r:ratchet1}), (\ref{r:ratchet2}):
\begin{equation}
\label{maineq}
\frac{d}{dt}u_{j}(t)=-V_{2}(u_{j-1}(t),u_{j}(t),t)-V_{1}(u_{j}(t),u_{j+1}(t),t), \hspace{3ex} j \in \mathbb{Z}
\end{equation}
($V_1,V_2$ denote the partial derivatives with respect to the first, resp. second coordinate). We assume that $V \colon \mathbb{R}^{3}\to \mathbb{R}$ satisfies the following in this and the next section: 
\begin{itemize}
	\item[(A1)] Smoothness: $(t,u,v)\mapsto V(t,u,v)$ is $C^{2}$ in $u,v$, and bounded and measurable in $t$ for fixed $u,v$.
	\item[(A2)] Periodicity: there is a fixed $\tau>0$ such that for all $u,v,t$ we have $V(u,v,t)=V(u+1,v+1,t)=V(u,v,t+2\tau)$,
	\item[(A3)] The twist condition: for some fixed $\delta > 0$ and all $u,v,t$,
	\[
	\frac{V(u,v,t)}{\partial u \partial v} \leq -\delta <0.
	\]
	\item[(A4)] Boundedness: There exists $C>0$ such that $\|D_{u,v}^{2}V(u,v,t)\|\leq C$. 
\end{itemize}

Let $T$ be the time-$2 \tau$ map $Tu(0)=u(2\tau)$, where $u(t)$ is the solution of (\ref{maineq}) (it follows from Lemma \ref{l:semiflow}, (i) and (v) below that $T$ is well-defined and that $T^nu(0)=u(2n\tau)$ for integer $n$). The shift $S$ and $T$ are then commuting continuous transformations on both $\tilde{\mathcal{X}}$ and $\mathcal{X}$. We say that $u,v \in \tilde{\mathcal{X}}$ do not intersect, if either $u=v$, $u \gg v$ or $u \ll v$, where the partial order on $\tilde{\mathcal{X}}$ is defined in the natural way: we write $u \geq v$ if $u_n \geq v_n$ for all $n\in \mathbb{Z}$, and $u \gg v$ if $u_n > v_n$ for all $n \in \mathbb{Z}$. 
We say that $u,v \in \mathcal{X}$ do not intersect, if their representations in $\tilde{\mathcal{X}}$ do not intersect for any choice of the representations. We recall the definition of rotationally ordered solutions, standard in the study of Frenkel-Kontorova models \cite{Baesens05,Baesens98,Floria02,Floria05,Floria95,Floria96,Hu05,Qin10,Qin11}, and of synchronized solutions \cite{Qin13,Slijepcevic14,Slijepcevic14b}:

\begin{definition} \label{d:synorbit} We say that $u \in \mathcal{X}$ is {\it rotationally ordered}, if for any $n\in \mathbb{Z}$, we have that $u$ and $S^n u$ do not intersect. A configuration $u \in \mathcal{X}$ is {\it weakly rotationally ordered}, if it is in the closure of the set of rotationally ordered configurations. A solution $t \mapsto u(t) \in \mathcal{X}$, $t \in \mathbb{R}$ of (\ref{maineq}) is {\it synchronized solution}, if for any $s \in \mathbb{R}$ and $n,m \in \mathbb{Z}$, we have that $u(s)$ and $S^n T^m u(s)$ do not intersect. A configuration $u^0 \in \mathcal{X}$ is synchronized, if there exists a synchronized solution of (\ref{maineq}), $u(0)=u^0$.
\end{definition}

The main result is an analogue of \cite[Theorems 3.4, 3.5]{Qin13}:
\begin{theorem}
	\label{t:existence}
	Given any mean spacing $\rho \in \mathbb{R}$, there exists a solution $u(t)$, $t\in \mathbb{R}$ of (\ref{maineq}) which is synchronized, such that for all $t \in \mathbb{R}$, $u(t)$ has the mean spacing $\rho$.
\end{theorem}
Following Qin \cite{Qin13}, we prove it in several steps, omitting proofs in cases the extension to (\ref{maineq}) is straightforward. We first establish global existence of solutions generated by (\ref{maineq}),  then for a rational $p/q$ construct an appropriate space of functions $\Omega$ and a continuous operator $\tilde{T}$, and finally prove Theorem \ref{t:existence} for rational $\rho=p/q$ by finding a fixed point of $\tilde{T}$ through an application of the Schauder fixed point theorem. The proof is completed for irrational $\rho$ by a limiting procedure. Recall \cite{Qin13,Slijepcevic13} that $\tilde{\mathcal{X}}$, $\mathcal{X}$ are metrizable, and that the sets $\mathcal{K}_n \subset \mathcal{X}$, $n \in \mathbb{N}$ of all $u \in \mathcal{X}$ satisfying $ \sup_{k \in \mathbb{Z}} |u_k - u_{k+1}| \leq n$ are compact.  We will also frequently use the fact \cite{Baesens98,Qin13} that any weakly rotationally ordered configuration $u$ is of bounded width with the constant $1$, i.e.\ that it has a well-defined mean spacing $\rho$, and that for all $m,k \in \mathbb{Z}$, $k \geq 1$,  we have
\begin{equation}
|u_{m+k}-u_m-k\rho| \leq 1. \label{r:width1}
\end{equation}

\begin{lemma} \label{l:semiflow}
	(i) For any $u^0 \in \mathcal{X}$, the solution of (\ref{maineq}), $u(0)=u^0$ exists for all $t>0$, and is continuous with respect to initial conditions.	
	
	(ii) The solution $t \rightarrow u(t)-u(0)$ is continuous in $l^{\infty}(\mathbb{Z})$.
	
	(iii) The sets $\mathcal{K}_n$ are positively invariant with respect to (\ref{maineq}), i.e. if $u(s) \in {\mathcal K}_n$, then for all $t \geq s$, $u(t) \in {\mathcal K}_n$.
	
	(iv) The solutions of (\ref{maineq}) are strongly order-preserving, i.e.\ if $u(0) \geq v(0)$ but not equal, then for all $t >0$, $u(t) \gg v(t)$,
	
	(v) If $u(t),v(t)$ are solutions of (\ref{maineq}), $u(0)=u^0$, resp. $v(2n \tau)=u^0$, $n$ an integer, then $v(t+2n\tau)=u(t)$.
\end{lemma}

\begin{proof}
	Local existence and uniqueness follow from standard results on existence of solutions of ODE in the Banach space $l^{\infty}(\mathbb{Z})$ \cite{Rabar15,Slijepcevic13}, applied to $v = u-u^0$. Continuity with respect to initial conditions in the topology of $\mathcal{X}$ follows from the variation of constants formula. The order-preserving property (iv), also called  Middleton's no-passing rule \cite{Middleton:92} follows as in \cite{Baesens05}, as the off-diagonal elements of the linearisation of the right-hand side of (\ref{maineq}) are strictly positive and bounded from below. The proof (iii) follows easily from the no-passing rule (details can be found e.g. in \cite{Slijepcevic13}), and this implies global existence of solutions as the solution can not diverge in finite time. Proof of (v) is a straightforward consequence of (A2).
\end{proof}

For a rational $p/q$, $p,q$ relatively prime integers, $p>0$, let $\tilde{\mathcal{X}}_{p,q}$ be the set of all $u \in \tilde{\mathcal{X}}$ such that $S^qu=u - p$. It is convenient to introduce a new norm on $\tilde{\mathcal{X}}_{p,q}$ with
$$||u||_{\tilde{\mathcal{X}}_{p,q}}=\sum_{k=0}^{q-1} |u_k|$$
($\tilde{\mathcal{X}}_{p,q}$ equipped with that norm is continuously embedded in $\tilde{\mathcal{X}}$).

\begin{definition} For a fixed rational $p/q$, the set $\Omega$ is a subset of $C^0(\mathbb{R},\tilde{\mathcal{X}}_{p,q})$ of $h : \mathbb{R} \rightarrow \tilde{\mathcal{X}}_{p,q}$ satisfying:
\begin{itemize}
 \item[(O1)] $h$ is increasing, i.e.\ if $s_1 > s_2$, then $h(s_1) \geq h(s_2)$,

\item[(O2)] The image of $h$ consists of weakly rotationally ordered configurations,

\item[(O3)] For all $s \in \mathbb{R}$,
$$ s=\frac{1}{q} \sum_{k=0}^{q-1} (h(s))_k,$$

\item[(O4)] For any $m,n \in \mathbb{Z}$, the image of $h$ is invariant for $ u \mapsto S^mu+n$.
\end{itemize}
\end{definition}

The topology on $\Omega$ induced by $C^0(\mathbb{R},\tilde{\mathcal{X}}_{p,q})$ is metrizable, and the metric can be given explicitly e.g. by the norm \cite[p3480]{Qin13}.

\begin{lemma} \label{l:compact}
	The set $\Omega$ with the induced $C^0(\mathbb{R},\tilde{\mathcal{X}}_{p,q})$ topology is non-empty, convex and compact.
\end{lemma}

\begin{proof} $\Omega$ is non-empty, as for example $h$ defined with $h(s)_k=s+kp/q-(q-1)p/2$ is in $\Omega$. Convexity is analogous to \cite[Lemma 3.2]{Qin13}, and compactness follows from an application of the Arzel\`{a}-Ascoli theorem as in \cite[Lemma 3.3]{Qin13}.
\end{proof}

\begin{lemma} \label{l:operator}
	There exists a continuous function $\xi : \mathbb{R} \rightarrow \mathbb{R}$ such that the operator $\tilde{T} : \Omega \rightarrow \Omega$ defined with $\tilde{T}h (s)= T \circ h \circ \xi(s)$ is well-defined and continuous.
\end{lemma}

\begin{proof}
	The function $\xi$ is constructed as in the proof of \cite[Theorem 3.4]{Qin13}.
\end{proof}

\begin{remark}
	The function $\xi$ is unique, constructed so that (O3) holds for the image of $\tilde{T}$.
\end{remark}

\begin{proof}[Proof of Theorem \ref{t:existence}]
For a rational $\rho = p/q$, we can by Lemmas \ref{l:compact} and \ref{l:operator} apply the Schauder Fixed Point theorem to $\tilde{T}$ on $\Omega$, and find a fixed point $h_{p,q}$ of $\tilde{T}$. By construction and Lemma \ref{l:semiflow}, (iv), for any $s_0 \in \mathbb{R}$, the solution $u(t)$ of (\ref{maineq}), $u(s_0)=h_{p,q}(s_0)$, exists for all $t \in \mathbb{R}$ and is synchronized. We obtain the claim for irrational $\rho$ analogously as in the proof of \cite[Theorem 3.5]{Qin13}, by finding a limit of the sequence of images of $h_{p_k/q_k}$ in the Hausdorff topology as $p_k/q_k \rightarrow \rho$. 
\end{proof}

\section{The transport speed} \label{s:four}

In this section we prove Theorem \ref{t:speed}. We first introduce the notion of the dynamic ground state, and more generally we consider probability measures invariant with respect to evolution given by (\ref{maineq}) and spatial translations. We then establish existence of synchronized measures, i.e.\ $S,T$-invariant measures supported on synchronized configurations, and then give a characterization of the transport speed as expectation of the displacement with respect to a synchronized measure. We use it to give a short proof of Theorem \ref{t:speed}, as well as to establish a characterization of the existence of transport in terms of evolution of the dynamic ground states.

\begin{lemma} 
For each $\rho \in \mathbb{R}$, there exists a $S,T$-invariant measure supported on synchronized configurations with the mean spacing $\rho$.
\end{lemma}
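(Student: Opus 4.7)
The plan is to construct the measure by averaging over the $(S,T)$-orbit closure of a single synchronized configuration, combining a Krylov-Bogolyubov step with a Markov-Kakutani step. Using Theorem \ref{t:existence}, fix a synchronized solution $u\colon \mathbb{R}\to\mathcal{X}$ with mean spacing $\rho$; since synchronized implies rotationally ordered, the width bound (\ref{r:width1}) confines the entire trajectory to the compact set $\mathcal{K}_N$ for some fixed $N$ depending only on $\rho$. Let $\mathcal{O}$ denote the closure in $\mathcal{X}$ of $\{S^n T^m u(0) : n,m\in\mathbb{Z}\}$; because $u$ is globally defined, $T^m u(0) = u(2m\tau)$ makes sense for negative $m$ as well, so $\mathcal{O}$ is a compact subset of $\mathcal{K}_N$ invariant under both $S$ (a homeomorphism of $\mathcal{X}$) and $T$.

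The central step is to show that every point of $\mathcal{O}$ itself lies on a synchronized orbit with mean spacing $\rho$. Two closedness facts drive this. First, non-intersection of $v,w\in\tilde{\mathcal{X}}$ is a closed condition in the product topology: its negation---the existence of indices $p,q$ with $v_p>w_p$ and $v_q<w_q$---is manifestly open. Second, within $\mathcal{K}_N$ the bound (\ref{r:width1}) passes to pointwise limits, so the mean spacing is preserved in the limit. Now given $z = \lim_k S^{n_k}T^{m_k}u(0)$, continuous dependence (Lemma \ref{l:semiflow}(i)) yields $\varphi^s(z) = \lim_k S^{n_k}u(2m_k\tau + s)$ for every $s\geq 0$, and for each $n,m$ the approximating pairs $\bigl(S^{n_k}u(2m_k\tau + s),\; S^{n+n_k}u(2m_k\tau + s + 2m\tau)\bigr)$ do not intersect by the synchronization of $u$. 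Passing to the limit shows that $\varphi^s(z)$ and $S^n T^m \varphi^s(z)$ do not intersect, so the orbit through $z$ is synchronized; its mean spacing equals $\rho$ by the first closedness observation.

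Finally, on the compact set $\mathcal{O}$ the continuous map $T$ admits a $T$-invariant Borel probability measure $\nu$ by Krylov-Bogolyubov. The set $\mathcal{M}_T(\mathcal{O})$ of all such measures is nonempty, convex, and weak-$*$ compact, and the push-forward $S_*$ is an affine homeomorphism of $\mathcal{M}_T(\mathcal{O})$ because $S$ and $T$ commute. The Markov-Kakutani fixed point theorem then produces $\mu \in \mathcal{M}_T(\mathcal{O})$ with $S_*\mu = \mu$; equivalently one can take any weak-$*$ limit point of the Ces\`aro averages $\tfrac{1}{2N+1}\sum_{n=-N}^{N}S^n_*\nu$. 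By the previous paragraph this $\mu$ is supported on synchronized configurations with mean spacing $\rho$, as required. I expect the main obstacle to lie in the closure step of the second paragraph: one must ensure both that non-intersection survives product-topology limits and that the mean spacing does not jump, and both points rely critically on the uniform width bound that keeps everything inside the single compact slice $\mathcal{K}_N$.
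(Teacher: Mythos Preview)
Your argument is a detailed expansion of exactly the paper's two-line proof: the set of synchronized configurations with mean spacing $\rho$ is a nonempty compact $S,T$-invariant subset of $\mathcal{X}$, so the Krylov--Bogolyubov construction for commuting maps produces an $S,T$-invariant measure on it. Your Krylov--Bogolyubov plus Markov--Kakutani decomposition is the standard way to implement this for two commuting maps, and restricting to a single orbit closure rather than the full synchronized set is harmless.

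One technical point in your closure step needs a correction. You assert that the negation of non-intersection is ``the existence of indices $p,q$ with $v_p>w_p$ and $v_q<w_q$''. This is not the full negation under the paper's convention (non-intersection means $v=w$ or $v>w$ or $v<w$, with strict inequality at \emph{every} coordinate): a product-topology limit of strictly ordered pairs can yield $v\geq w$, $v\neq w$, with equality at some coordinate, and that still counts as an intersection. The repair is short and uses ingredients you already have on the table. The limit does give weak comparability, say $z\geq S^nT^m z$; if they are equal you are done, and if not, use the backward-time extension of the orbit (which you already invoke) to write $z=\varphi^{\epsilon}(z')$ and $S^nT^m z=\varphi^{\epsilon}(S^nT^m z')$ with $z'\geq S^nT^m z'$, $z'\neq S^nT^m z'$, and then the strong order-preserving property of Lemma~\ref{l:semiflow}(iv) upgrades this to the strict inequality $z> S^nT^m z$. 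With this adjustment your proof is correct and matches the paper's approach.
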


\begin{proof}
It is straightforward to show by using Lemma \ref{l:semiflow} and (\ref{d:width}) that the set of synchronized configurations with the mean spacing $\rho$ is a compact invariant subset of $\mathcal{X}$, thus by a straightforward extension of the Krylov-Bogolyubov argument to commuting continuous transformations $S,T$ on compact metric spaces, there exists a $S,T$-invariant measure supported on that set. (For example, it is any limit point in the weak$^*$-topology on the set of probability measures on $\mathcal{X}$, of the sequence of measures 
$$\mu_n = \frac{1}{n^2} \sum_{j=0}^{n-1}\sum_{k=0}^{n-1}\delta_{S^jT^ku},$$ 
where $u$ is any synchronized configuration with the mean spacing $\rho$, and $\delta_u$ is the Dirac measure supported on $u$.)
\end{proof}

\begin{definition} \label{d:synmeasure} The {\it synchronized measure} is a $S,T$-invariant measure supported on synchronized configurations with the same mean spacing. We say that every configuration in the support of a synchronized measure is a {\it dynamic ground state}, and denote the set of all dynamic ground states with the mean spacing $\rho$ by $\mathcal{S}_{\rho}$.
\end{definition}

\begin{remark}
As discussed in \cite{Rabar15,Slijepcevic14b}, one can show that $\mathcal{S}_{\rho}$ is a closed, thus compact set. Not all synchronized configurations with the mean spacing $\rho$ are in $\mathcal{S}_{\rho}$, for example in the case of dynamics (\ref{r:ratchet1}) without the external force, i.e.\ $V' \equiv 0$, these are minimizing equilibria which are not spatially recurrent, i.e. with elementary discommensurations \cite{Aubry83}. In the depinned phase (see \cite{Rabar15,Slijepcevic14b} for details), configurations in $\mathcal{S}_{\rho}$ are in a certain sense globally attracting, and are always locally attracting. Details are omitted, as not required in the following.
\end{remark}

If $\mu$ is a synchronized measure with the mean spacing $\rho$, we define the transport speed of a measure $\hat{v}(\mu)$ as the expectation of the displacement over one period of the pulse:
\begin{equation}
\hat{v}(\mu) =\frac{1}{2\tau}  \int_{\mathcal{X}}((Tu)_0-u_0) d\mu. \label{r:vdef}
\end{equation}

Clearly by the $S$-invariance of $\mu$ and by the fact that $T,S$ commute, the site index $0$ in the definition of $\hat{v}(\mu)$ can be replaced by any $k \in \mathbb{Z}$. We now show that $\hat{v}$ depends only on the mean spacing $\rho$.

\begin{lemma} \label{l:speed1}
	Choose $\rho \in \mathbb{R}$, and a synchronized measure $\mu$ with the mean spacing $\rho$. Then for any $u \in \mathcal{S}_{\rho}$ and any $k \in \mathbb{Z}$,
	\begin{equation}
	\lim_{m \rightarrow \infty} \frac{1}{2m\tau}\left( (T^mu)_k-u_k \right) = \hat{v}(\mu). \label{r:vlimit}
	\end{equation}
	Furthermore, $\hat{v}(\mu)$ is the same for all the synchronized measures with the same mean spacing.
\end{lemma}

\begin{proof} We first prove the claim for $k = 0$. Choose $\rho \in \mathbb{R}$, and a synchronized measure $\mu$ with the mean spacing $\rho$. By the Birkhoff Ergodic Theorem applied to the function $u \mapsto (Tu)_0-u_0$, there exists $u$ in the support of $\mu$, thus in $\mathcal{S}_{\rho}$, such that the limit on the left-hand side of (\ref{r:vlimit}) for $k=0$ exists, denote it by $v_0$. 
	
Now if $w \in \mathcal{S}_{\rho}$, because of (\ref{r:width1}) we can find representations of $u,w$ in $\tilde{\mathcal{X}}$, denoted for simplicity again by $u,w$, so that $ u \leq w \leq u+4$.
By the order-preserving property, for any integer $m \geq 1$ we get $T^mw \geq T^mu$, thus $T^mw - w \geq T^mu-u-4$, and analogously $T^mw-w \leq T^mu-u + 4$. By taking $m \rightarrow \infty$, we see that $\lim_{m \rightarrow \infty}\frac{1}{2m\tau}\left( T^m(w)_0-w_0 \right)=v_0$, thus it is independent of the choice of $w \in \mathcal{S}_{\rho}$. By the Birkhoff ergodic theorem and (\ref{r:vdef}) we deduce that $v_0 = \hat{v}(\mu)$ which is (\ref{r:vlimit}) for $k=0$. As $\mu$ is $S$-invariant, this must hold for any $k\in \mathbb{Z}$. As $w$ was chosen arbitrarily in $\mathcal{S}_{\rho}$, we see that $\hat{v}(\mu)$ is the same for all $\mu$ supported on $\mathcal{S}_{\rho}$.
\end{proof}

By Lemma \ref{l:speed1}, we can define $v(\rho):= \hat{v}(\mu)$, where $\mu$ is any synchronized measure with the mean spacing $\rho$.

\begin{lemma} \label{l:speedbound}
	Let $u^0 \in \mathbb{R}^\mathbb{Z}$ be of bounded width with the mean spacing $\rho$ and constant $n$ as in (\ref{d:width}). Then for any $k,m \in \mathbb{Z}$, $m \geq 1$ and any $t_0 \in \mathbb{R}$, if $u(t)$ is the solution of (\ref{r:ratchet1}) or (\ref{r:ratchet2}), $u(t_0)=u^0$, we have that (\ref{r:five}) holds. 
\end{lemma}

\begin{proof} Without loss of generality let $t_0=0$.
Choose any $w \in \mathcal{S}_{\rho}$. It is easy to check that because of (\ref{d:width}) applied to $u^0$ and (\ref{r:width1}) applied to $w$, we can find representatives of $u^0$ and $w$ in $\tilde{\mathcal{X}}$ (for simplicity denoted by the same symbol), so that
\begin{equation} \label{r:est1}
 w \leq u^0 \leq w + n + 1.
\end{equation}
Fix an integer $m \geq 1$. As $w$ is synchronized, there exists an integer $j$ such that 
\begin{equation} \label{r:est2}
w + j \leq T^mw \leq w + j + 1.
\end{equation}
We claim that 
\begin{equation} \label{r:est3}
\frac{j}{2m\tau} \leq v(\rho) \leq \frac{j+1}{2m\tau}.
\end{equation}
Indeed, as in $\tilde{\mathcal{X}}$ we have that $T^m(u+j)=T^m(u)+j$, inductively from (\ref{r:est2}) we obtain that for any integer $r \geq 1$, 
$ rj \leq T^{rm}w - w \leq rj + r $. By taking $r \rightarrow \infty$ and using (\ref{r:vlimit}) and the definition of $v(\rho)$ we obtain (\ref{r:est3}). 

Combining (\ref{r:est1}) and (\ref{r:est2}) and using the order-preserving property, we see that $T^mu^0 \geq T^m w \geq w+ j \geq u^0 + j - n - 1$ and equivalently $T^mu^0 \leq T^mw + n + 1 \leq w + n + j + 2 \leq u^0 + n + j + 2$, thus
$$
 \frac{j-n-1}{2m\tau} \leq \frac{T^m u^0 - u^0}{2m\tau} \leq \frac{j+n+2}{2m\tau}.
$$
Combining it with (\ref{r:est3}) we obtain (\ref{r:five}).
\end{proof}

\begin{lemma} \label{l:continuous}
	The function $\rho \mapsto v(\rho)$ is continuous.
\end{lemma}

\begin{proof}
	As shown in \cite{Slijepcevic14b}, the set of all the synchronized measures is closed in the weak$^*$-topology. Let $\rho_k \in \mathbb{R}$ be a sequence converging to $\rho \in \mathbb{R}$, and choose a sequence of synchronized measures $\mu_k$ with the mean spacings $\rho_k$. It is easy to establish by using (\ref{d:width}) that all $\mu_k$ are supported on a compact subset of $\mathcal{X}$, thus without loss of generality assume $\mu_k$ is weak$^*$-convergent (otherwise we choose a convergent subsequence), converging to a synchronized measure $\mu$. As the mean spacing of a synchronized measure is the expectation of the continuous function $u \mapsto u_1-u_0$ on $\mathcal{X}$, we have that $\rho$ is the mean spacing of $\mu$. Also $\hat{v}$ is the expectation of the continuous function $u \mapsto \frac{1}{2 \tau}\left((Tu)_0-u_0 \right)$ on $\mathcal{X}$, thus $\hat{v}(\mu_k) \rightarrow \hat{v}(\mu)$ as $k \rightarrow \infty$, which completes the proof by the definition of $v(\rho)$.
\end{proof}

\begin{remark} Continuity of the analogue of the transport speed, the {\it average velocity}, was for related systems of equations proved by Wen-Xin Qin in \cite{Qin10,Qin13}, by another approach. The technique as in the proof of Lemma \ref{l:continuous}, by considering limits of measures, was introduced in \cite{Slijepcevic13}.
\end{remark}

\begin{remark}
 Consider a family of couplings $W$ or potentials $V$ continuously depending on a parameter $\lambda \in \mathbb{R}$. One can analogously to Lemma \ref{l:continuous} show that $v(\rho)$ changes continuously in $\lambda$. 
\end{remark}
         
\begin{proof}[Proof of Theorem \ref{t:speed}] Uniqueness of $v(\rho)$ follows from (\ref{r:five}) by taking $m \rightarrow \infty$. The other claims have been established in Lemmas \ref{l:speedbound} and \ref{l:continuous}.                                                                                                                                                                                                                                                                                                                                                                                                                                                                                                                                                                                                                                                                                                                                                   
\end{proof}

We close the section with a nice characterisation of the existence of transport.

\begin{corollary} \label{c:notzero}
	The transport speed $v(\rho)=0$ if and only if $T$ restricted to $\mathcal{S}_{\rho}$ is the identity. 
\end{corollary}

\begin{proof}
	The non-trivial implication is that $v(\rho)=0$ implies $Tu=u$ for all $u \in \mathcal{S}_{\rho}$. Assume $v(\rho)=0$, and choose a synchronized measure $\mu$ with the mean spacing $\rho$. Without loss of generality, assume $\mu$ is $S,T$-ergodic, as otherwise we can take any $S,T$-invariant measure in the $S,T$-ergodic decomposition of $\mu$ supported on $\mathcal{S}_{\rho}$. By a well-known characterization of ergodic measures for two commuting transformations on a compact set, we can find $u \in \mathcal{S}_{\rho}$ so that its $S,T$-orbit (i.e.\ the set of all $S^nT^m u$, $n,m \in \mathbb{Z}$) is dense in the support of $\mu$ (see \cite{Katok05}, Proposition 4.1.18, (2)). As $u$ is synchronized, $Tu \geq u$ or $Tu \leq u$. By the order-preserving property and continuity, we have that for all $w$ in the support of $\mu$, $Tw - w \leq 0$ or $Tw - w \geq 0$ and the sign is always the same (it depends only on $\rho$). Without loss of generality, assume $Tw \geq w$ on the support of $\mu$, thus $(Tw)_0-w_0 \geq 0$, $\mu$-a.e.. By definition, $v(\rho)=\hat{v}(\mu)$, thus 
	\[ 
	\int \left( (Tw)_0-w_0 \right)d\mu =0,
	\]
	so it must be $(Tw)_0=w_0$, $\mu$-a.e.. Now by continuity and $S$-invariance of $\mu$, we have $Tw=w$ on the support of $\mu$. As the union of supports of all $S,T$-ergodic measures is dense in $\mathcal{S}_{\rho}$, the claim must hold on $\mathcal{S}_{\rho}$.
\end{proof}

\begin{remark} \label{rm:hull}
We comment here on existence of a dynamical hull function and its relationship with $\mu(t)^*$. First, one can prove an analogue of the existence result for the dynamical hull function in the case of periodically driven Frenkel-Kontorova model, by Wen-Xin Qin \cite{Qin13} (see Theorem B), in the following sense. Consider the case when $\mu^0$ is a synchronized measure, which is $S$-ergodic, with the mean spacing $\rho = \hat{\rho}(\mu^0)$. Then one can prove analogously as in \cite{Qin13} existence of a two-variable dynamical hull function $H : \mathbb{R}^2 \rightarrow \mathbb{R}$, such that
\begin{equation}
H(x+1,y+1)=H(x+1,y) = H(x,y) + 1, \label{r:hull}
\end{equation}
and such that for every $t \in \mathbb{R}$, and for $\mu(t)$-a.e. $u$, we have that there exists $\alpha \in \mathbb{R}$ such that for all $n \in \mathbb{Z}$,
\begin{equation}
u_n = H\left( n \rho + 2v(\rho)\tau + \alpha, \frac{t}{2\tau} \right). \label{r:hull2}
\end{equation}
Fix $t$, and denote by $h_t$ the map on $\mathbb{S}^1$ induced by $x \mapsto H(x,t/(2\tau))$ ($h_t$ is well defined because of (\ref{r:hull})). Then $\mu(t)^*$ is the push-forward of the Lebesgue measure on $\mathbb{S}^1$ with respect to $h_t$. Here is why (we omit the details of the proof). By the standard arguments of the Aubry-Mather theory \cite{Bangert88}, one can show that for $\mu(t)$-a.e. $u$, $u_n \mapsto u_{n+1} \operatorname{mod} 1$ induces a circle homeomorphism, denoted by $f_t$. By construction, $\mu(t)^*$ is the invariant measure of $f_t$. Now (\ref{r:hull2}) can be rewritten as
$
 f_t = h_t \circ r_{\rho} \circ h_t^{-1},
$
where $r_{\rho}$ is the rotation of the circle $x \mapsto x +\rho \operatorname{mod} 1$, i.e. $h_t$ is the conjugacy between $r_{\rho}$ and $f_t$, thus the push-forward with respect to $h_t$ of the Lebesgue measure which is invariant for $r_{\rho}$ must be $\mu(t)^*$.
\end{remark}

\section{Dynamics in the off-phase} \label{s:five}

In this section we prove Proposition \ref{p:first}.
This follows from the following observations. If $\mu$ is a synchronized measure, then by the Birkhoff ergodic theorem and (\ref{r:width1}), as $\hat{\rho}(\mu)=\mathbb{E}_{\mu}[u_1-u_0]$, each $u$ in the support of $\mu$ has the mean spacing $\hat{\rho}(\mu)$. We show that decay of $\hat{\Delta}(\mu)^2=\mathbb{V}\text{ar}[u_1-u_0]$ in the off phase is proportional to the decay of the average coupling energy, which we define for a $S$-invariant measure $\mu$ with
\[
\hat{W}(\mu):=\int_{\mathcal{X}} W(u_1-u_0) d\mu.
\] 
We first establish that $D_{\text{off}}(\rho)=0$ (Lemma \ref{l:relax1}), then relate $\hat{\Delta}$ and $\hat{W}$ (Lemma \ref{l:relax2}), then prove a discrete-space analogue of the Poincar\'{e} inequality (Lemma \ref{l:relax3}), and finally combine it all by estimating the speed of decay of $\hat{W}(\mu(t))-W(\rho)$.

In this and the next section we fix a mean spacing $\rho \in \mathbb{R}$ and a synchronized measure $\mu=\mu(0)$ with the mean spacing $\rho$, and its evolution $\mu(t)$ with respect to (\ref{r:ratchet1}). We frequently use that for all $t \geq 0$, $\hat{\rho}(\mu(t))=\rho$, and that $\mu(t)$ is $S$-invariant.

The following lemma is equivalent to the fact used in \cite{Floria05}, that the average position of the site mod $1$ for a chosen rotationally ordered configuration does not change in the off-phase.

\begin{lemma} \label{l:relax1} 
	For any $k \in \mathbb{Z}$,
	$ \int_{\mathcal{X}}	\left( u_k(\tau)-u_k(0) \right) d\mu = 0$. Specifically, $D_{\text{off}}(\rho)=0$.
\end{lemma}
\begin{proof}
	By using the Fubini theorem, we get:
	\begin{align*}
		\int_{\mathcal{X}} \left( u_k(\tau)-u_k(0) \right) d\mu & =  \int_{\mathcal{X}} \int_0^{\tau} \frac{du_k(t)}{dt}\: dt \: d\mu 
		=  \int_0^{\tau} \left( \int_{\mathcal{X}} \frac{du_k(t)}{dt}\:d\mu \right )dt \\
		& =  \int_0^{\tau} \left( \int_{\mathcal{X}} W'(u_{k+1}(t)-u_k(t))d\mu - \int_{\mathcal{X}} W'(u_k(t)-u_{k-1}(t)) d\mu \right )dt \\
		& =  \int_0^{\tau} \left( \int_{\mathcal{X}} W'(u_{k+1}-u_k)d\mu(t) - \int_{\mathcal{X}} W'(u_k-u_{k-1}) d\mu(t) \right )dt,
	\end{align*}
	which is by the $S$-invariance of $\mu(t)$ equal to $0$.
\end{proof}

In the following proof we use the {\it width} function defined for any configuration of bounded width, $u \mapsto w_j(u)$ for $j \in \mathbb{Z}$ as follows:
$$ w_j(u)=\tilde{u}_j-\rho j -a_0,$$
where $\rho$ is the mean spacing of $u$, $\tilde{u}$ is a representative of $u$ in $\tilde{\mathcal{X}}$, and $a_0$ is the supremum of all $a\in \mathbb{R}$ such that for all $k \in \mathbb{Z}$, $\tilde{u}_k \geq k \rho + a$ (the definition is clearly independent of the choice of $\tilde{u}$). It is easy to check that by (\ref{r:width1}), for weakly rotationally ordered configurations we have that $|w_j(u)| \leq 1$, and that
\begin{equation}
w_j(u)-w_{j-1}(u)=u_j-u_{j-1}-\rho. \label{r:width2}
\end{equation}
Furthermore, it is straightforward to check that $w_j$ is upper semi-continuous, thus Borel measurable. 

We now establish a discrete-space version of the Poincar\'{e} inequality, bounding the $L^2$-norm of $u_1-u_0-\rho$ by the $L^2$-norm of its discrete-space derivative $u_1-2u_0+u_{-1}$.
\begin{lemma} \label{l:relax2}
	For all $t \geq 0$ we have that
	\begin{eqnarray}
	\hat{\Delta} (\mu(t))^2 \leq \left(  \int_{\mathcal{X}} (u_1-2u_0+u_{-1})^2 d\mu(t) \right)^{1/2}. \label{r:poincare}
	\end{eqnarray}
\end{lemma}
\begin{proof}
	First note the following identity holding for any sequences $z_j,w_j \in \mathbb{R}$, $j\in \mathbb{Z}$:
	$$ z_jw_j-z_{j-1}w_{j-1}=z_j (w_j-w_{j-1})+(z_j-z_{j-1})w_{j-1}.$$
	Now for some $u$ in the support of $\mu(t)$, thus rotationally ordered, we insert $z_j=(u_j-u_{j-1}-\rho)$ and $w_j=w_j(u)$. Applying (\ref{r:width2}) we easily get
	$$ (u_j-u_{j-1}-\rho)w_j(u)-(u_{j-1}-u_{j-2}-\rho)w_{j-1}(u)=(u_j-u_{j-1}-\rho)^2+(u_j-2u_{j-1}+u_{j-2})w_{j-1}(u).$$
	As all the terms are well defined functions on $\mathcal{X}$, we can integrate with respect to $\mu(t)$ for $j=1$. Note that because of the $S$-invariance of $\mu(t)$, the terms on the left-hand side vanish, so we have
	$$ \int_{\mathcal{X}} (u_1-u_0-\rho)^2 d\mu(t) = - \int_{\mathcal{X}} (u_1-2u_0+u_{-1})w_0(u) d\mu(t).$$
	It now suffices to apply the definition of $\hat{\Delta}(\mu(t))$ to the left-hand side, and the Cauchy-Schwartz inequality and $|w_0(u)| \leq 1$ to the right-hand side.
\end{proof}

\begin{lemma} \label{l:relax3} The following relations hold for all $t \geq 0$:
	\begin{gather}
	(\delta^-)^2 \hat{\Delta} (\mu(t))^4   \leq  \int_{\mathcal{X}} \left( W'(u_1-u_0)- W'(u_0-u_{-1}) \right)^2  d\mu(t),  \label{r:right} \\
		\frac{\delta^-}{2}  \hat{\Delta}(\mu(t))^2    \leq \hat{W}(\mu(t)) - W(\rho) \leq  \frac{\delta^+}{2}  \hat{\Delta}(\mu(t))^2. \label{r:leftright}
	\end{gather}
\end{lemma}

\begin{proof}
	The relation (\ref{r:right}) follows from the Mean Value Theorem applied to the function $W'$ and (\ref{r:poincare}).
	
	It suffices to prove the left-hand side of (\ref{r:leftright}), as the proof of the right-hand side is analogous.
	Let $x_1,...,x_n$ be any numbers in $[\rho-1,\rho+1]$, and let $\bar{x}=(x_1+...+x_n)/n$, for some integer $n>0$. By the second order Taylor theorem, we have
	\begin{equation*}
	W(x_k)-W(\bar{x}) \geq  W'(\bar{x})(x_k-\bar{x}) + \frac{\delta^-}{2}(x_k-\bar{x})^2,
	\end{equation*}
	thus by averaging we get
	\begin{equation}
	\frac{1}{n}\sum_{k=1}^nW(x_k) - W(\bar{x}) \geq  \frac{\delta^-}{2n} \sum_{k=1}^n (x_k-\bar{x})^2. \label{r:average}
	\end{equation}
	Now we insert $x_k=u_k-u_{k-1}$, note that then $\bar{x}=(u_n-u_0)/n$, and integrate (\ref{r:average}) with respect to $d\mu(t)$:
	\begin{eqnarray*}
	\frac{1}{n}\sum_{k=1}^n\int_{\mathcal{X}} W(u_k-u_{k-1})d\mu(t) - \int_{\mathcal{X}} W\left( \frac{u_n-u_0}{n} \right)d\mu(t)  \geq  \frac{\delta^-}{2n} \sum_{k=1}^n \int_{\mathcal{X}} \left(u_k-u_{k-1}-\frac{u_n-u_0}{n} \right)^2 d\mu(t). 
	\end{eqnarray*}
	By the $S$-invariance of $\mu(t)$ applied to the first term, and (\ref{r:width1}) applied to the right-hand side, we get
	\begin{align*}
	\hat{W}(\mu(t)) - \int_{\mathcal{X}} W\left( \frac{u_n-u_0}{n} \right)d\mu(t)  &\geq  \frac{\delta^-}{2n} \sum_{k=1}^n \int_{\mathcal{X}} \left(u_1-u_0-\frac{u_{n-k+1}-u_{1-k}}{n} \right)^2 d\mu(t) \\
	& = \frac{\delta^-}{2} \int_{\mathcal{X}} \left(u_1-u_0-\rho \right)^2 d\mu(t)  + O\left(\frac{1}{n}\right).
	\end{align*}
	Now it suffices to consider $n \rightarrow \infty$ and apply the Lebesgue Dominated Convergence Theorem to the second term, which thus converges to $W(\rho)$.
\end{proof}

We can now complete the proof, by using the fact noted in \cite{Slijepcevic00} that $\mu \rightarrow \hat{W}(\mu)$ is the Lyapunov function in the off-phase for the induced semiflow on the space of $S$-invariant measures.

\begin{proof}[Proof of Proposition \ref{p:first}] 
	We first express $d\hat{W}(\mu(t))/dt$, then use Lemmas \ref{l:relax2} and \ref{l:relax3} to obtain a differential inequality, and then we solve it. Denoting by $\dot{u}_k(t)=du_k(t)/dt$, we see that
	\begin{align}
		\frac{dW(u_1(t)-u_0(t))}{dt} & =  W'(u_1(t)-u_0(t))(\dot{u}_1(t)-\dot{u}_0(t)) \notag \\
		& =  W'(u_1(t)-u_0(t))\dot{u}_1(t)-W'(u_0(t)-u_{-1}(t))\dot{u}_0(t) \notag \\ & - \left(W'(u_1(t)-u_0(t))-W'(u_0(t)-u_{-1}(t))\right)\dot{u}_0(t). \label{r:justifiable}
	\end{align}
	Now integrating it with respect to $\mu(t)$, the first two summands on the right-hand side cancel out because of the $S$-invariance of $\mu(t)$. As $\dot{u}_0=W'(u_1(t)-u_0(t))-W'(u_0(t)-u_{-1}(t))$, we get
	\begin{align}
	\frac{d\hat{W}(\mu(t))}{dt} &= \frac{d}{dt} \int W(u_1-u_0)d\mu(t) = \int \frac{d}{dt} W(u_1-u_0)d\mu(t) \label{r:swap} \\
	 &=	 - \int \left( W'(u_1-u_0)-W'(u_0-u_{-1} \right)^2 d\mu(t). \notag
	\end{align}
	The permutation of the Lebesgue integral and derivative in (\ref{r:swap}) is justified by the Lebesgue dominated convergence theorem, applicable as the derivative $dW(u_1-u_0)/dt$ expressed by (\ref{r:justifiable}) is uniformly bounded on the support of $\mu(t)$.	
	
	We now apply (\ref{r:right}) and (\ref{r:leftright}) to the right-hand side, and get the differential inequality
	\begin{equation}
	\frac{d\hat{W}(\mu(t))}{dt} \leq - 4\left( \frac{\delta^-}{\delta^+}\right)^2 \left( \hat{W}(\mu(t)) - W(\rho) \right)^2. \label{r:diffineq}
	\end{equation}
	Note that by the left-hand side of (\ref{r:leftright}), we have
	$ \hat{W}(\mu(t)) \geq W(\rho)$.
	Unless $\mu(t)$ is for all $t$ supported on quasiperiodic configurations of the type $u_k=\rho \cdot k + a$ (in which case the dynamics in the off-phase is trivial and $\hat{\Delta}(\mu(\tau))=0$), we also deduce that $ \hat{W}(\mu(t)) > W(\rho)$. 
	Thus we can solve the differential inequality (\ref{r:diffineq}), and obtain
	$$ \hat{W}(\mu(\tau))-W(\rho) \leq \frac{1}{4\left( \frac{\delta^-}{\delta^+}\right)^2\tau + \left( \hat{W}(\mu(0))-W(\rho) \right)^{-1}}.$$
	Combining (\ref{r:width1}) and (\ref{r:leftright}) we get $\hat{W}(\mu(0))-W(\rho)\leq \delta^+/2$, thus
	$$ \hat{W}(\mu(\tau))-W(\rho)  \leq \frac{(\delta^+)^2}{4(\delta^-)^2\tau+2\delta^+},$$
	which combined with the left-hand side of (\ref{r:leftright}) completes the proof.
\end{proof}

\section{Dynamics in the on-phase} \label{s:six}

We focus now on the dynamics of a synchronized measure $\mu$ on the on-phase $t \in [\tau,2\tau]$, and prove Proposition \ref{p:second}. We first give an explicit lower bound on $D_{\text{on}}(\rho)$ in terms of $\mu(\tau)^*$, and then complete the proof.

Assume from now on in this section without loss of generality that (\ref{r:a2}) holds on the interval $[1/2-\alpha,1]$. Let $\varphi : \mathbb{S}^1 \rightarrow \mathbb{R}$ be defined with
\begin{equation}
\varphi(x)=\begin{cases} (\beta \tau) \wedge (1-x) & x > 1/2-\alpha, \\
-x & x \leq 1/2-\alpha, \end{cases} \nonumber
\end{equation}
where $\mathbb{S}^1$ is parametrized with $x\in [0,1)$.

\begin{lemma} \label{l:on3} Assume $\mu=\mu(0)$ is a synchronized measure. Then
	$$ D_{\text{on}}(\rho) \geq\frac{1}{2\tau} \int_{\mathbb{S}^1}\varphi(x)d\mu(\tau)^*.$$
\end{lemma}

\begin{proof} We first establish that for any weakly rotationally ordered $u$, we have that $|u_1-2u_0+u_{-1}| \leq 1$. Let $\tilde{u}$ be a representative of $u$ in $\tilde{\mathcal{X}}$, and find integer $n$ such that 
	\begin{equation}
	\tilde{u}_1 + n > \tilde{u}_0 \geq \tilde{u}_1 + n-1. \label{r:whyone}
	\end{equation} 
	Then as $\tilde{u}$ is weakly rotationally ordered, $S^{-1}\tilde{u}+n\geq \tilde{u}$, thus $\tilde{u}_{0} +n > \tilde{u}_{-1} $, so by summing it with the right-hand side of (\ref{r:whyone}) we obtain $\tilde{u}_1-2\tilde{u}_0+\tilde{u}_{-1} \leq 1$; $\tilde{u}_1-2\tilde{u}_0+\tilde{u}_{-1} \geq -1$ is analogous.
	Now take $u$ in the support of $\mu(t)$, thus rotationally ordered, such that $\pi(u) \in (1/2-\alpha,1]$. Then using (\ref{r:a2}) we get
	\begin{align*}
	\frac{d}{dt}u_{0}(t) &= -W'(u_0-u_{-1})+W'(u_1-u_0)+KV'(u_0) \\
	& \geq  - \delta^+ |u_1-2u_0+u_{-1}| + (\delta^+ + \beta) \geq  \beta.	
	\end{align*}
	Let $x=\pi(u(\tau))$. We easily deduce that, if $x \in [1/2-\alpha,1)$, then
	$$ u_0(2\tau)-u_0(\tau) \geq \beta \tau \wedge (1-x).$$
	Similarly, if $x \in [0,1/2-\alpha)$,
	$$ u_0(2\tau)-u_0(\tau) \geq - x.$$
	We thus see that
	$$
	\int (u_0(2\tau)-u_0(\tau) ) d \mu \geq  \int \varphi(\pi(u)) d\mu(u) 
	=  \int_{\mathbb{S}^1}\varphi(x)d\mu^*(\tau),
	$$
	which implies the claim by the definition of $D_{\text{on}}(\rho)$.
\end{proof}

\begin{proof}[Proof of Proposition \ref{p:second}] 
	Assume $d_1(\mu(\tau)^*,\lambda) \leq \ve \leq 1/4$.
	
	As $\varphi$ is piece-wise linear, and as it attains its minimum at $x=1/2-\alpha$, and its derivative is everywhere $\leq 1$, we can explicitly construct the probability measure $\nu$ on $\mathbb{S}^1$ for which $\int_{\mathbb{S}^1}\varphi(x) d\nu$ is minimal under the constraint $d_1(\nu,\lambda)\leq \ve$. Let $0 \leq \delta \leq 1$ be the unique number such that
	\begin{equation}
	\int_{1/2-\alpha}^{1/2-\alpha+\delta} d(x,1/2-\alpha) dx = \ve, \label{d:delta}
	\end{equation}
	where $d(x,y)$ is the distance on $\mathbb{S}^1$. 
	The measure $\nu$ satisfies $d_1(\nu,\lambda) \leq \ve$, and is constructed by "transporting" the measure $\lambda$ from $ x \in \mathbb{S}^1$ to $x=1/2-\alpha$, starting from points for which $\varphi(x)-\varphi(1/2-\alpha)$ is larger (here "transport" is used in the sense of minimal transport characterisation of the Wasserstein distance). More precisely, let
	$$
	\nu = \delta \cdot \delta_{ 1/2 - \alpha } + \lambda_{[1/2-\alpha,1/2-\alpha + \delta)^c}
	$$
	(where $\delta_{x}$ is the Dirac measure, $\lambda_A$ the Lebesgue measure on the interval $A$, and the complement and calculations are on $\mathbb{S}^1$). The coupling $\zeta$ of $(\nu,\lambda)$ is given with
	$ \zeta =  \delta_{ 1/2 - \alpha } \times \lambda_{[1/2-\alpha,1/2-\alpha + \delta)} + \zeta_0$, $\zeta_0$ the "diagonal" measure uniformly supported on $\lbrace (x,x), \: x\in [1/2-\alpha,1/2-\alpha + \delta)^c \rbrace$,
	which gives $d_1(\nu,\lambda) \leq \ve$ because of (\ref{d:delta}) and the definition of the $L^1$-Wasserstein distance. 
	
	Consider the case $0 \leq \delta \leq 1/2+\alpha -\beta \tau$. Then by Lemma \ref{l:on3},
	\begin{align}
	2\tau D_{\text{on}}(\rho)  & \geq   \int_{\mathbb{S}^1} \varphi(x) d\nu   \nonumber \\
	& =   \int_0^{1/2-\alpha} (-x)dx - \delta(1/2-\alpha) + \int_{1/2-\alpha+\delta}^{1-\beta\tau}\beta \tau dx + \int_{1-\beta\tau}^1 (1-x)dx. \notag
	\end{align}
	Careful evaluation yields
	\begin{equation}
	2\tau D_{\text{on}}(\rho) \geq \alpha -\delta+\frac{1}{2}\delta^2-\frac{1}{2}\left( \frac{1}{2} + \alpha - \beta \tau - \delta \right)^2. \label{r:final2}
	\end{equation}
	Similarly, if $ 1/2+\alpha -\beta \tau \leq \delta \leq \alpha + 1/2$,
	\begin{align}
	2\tau D_{\text{on}}(\rho)  & \geq  \int_{\mathbb{S}^1} \varphi(x) d\nu  
	=   \int_0^{1/2-\alpha} (-x)dx - \delta(1/2-\alpha) + \int_{1/2-\alpha+\delta}^1 (1-x)dx \notag \\
	& =  \alpha - \delta(1/2-\alpha) - \int_{1/2-\alpha}^{1/2-\alpha+\delta} (1-x)dx \notag \\
	& =  \alpha- \delta + \frac{1}{2}\delta^2. \label{r:final3}
	\end{align}
	Finally, if $\alpha + 1/2 \leq \delta \leq 1$, we easily get that
	\begin{equation}
	2\tau D_{\text{on}}(\rho)   \geq  \int_{\mathbb{S}^1} \varphi(x) d\nu \geq   \int_{\delta-1/2-\alpha}^{1/2-\alpha} (-x)dx - \delta(1/2-\alpha) = \alpha- \delta + \frac{1}{2}\delta^2. \label{r:final3b}
	\end{equation}
	From (\ref{d:delta}) we obtain
	\begin{equation*}
	\ve=\begin{cases} \delta^2/2 &  \delta \leq 1/2 \\
	\delta-\delta^2/2 -1/4 &  1/2 \leq \delta \leq 1,
	\end{cases} 
	\end{equation*}
	thus it is easy to check that $\delta \leq 2 \ve^{1/2}$.
	Inserting that in (\ref{r:final2}), (\ref{r:final3}) and (\ref{r:final3b}), as in all three cases the right-hand sides are decreasing functions in $\delta$, we complete the proof.
\end{proof}

\section{Standard deviation of the spatial displacement and Wasserstein distance} \label{s:seven}

We now prove Proposition \ref{p:third}. In this section, $\mu$ is an arbitrary $S$-invariant probability measure on $\mathcal{X}$, such that $\hat{\rho}(\mu)$ and $\hat{\Delta}(\mu)$ are finite. For some $u \in \mathcal{X}$, denote by $\xi_q^*(u)$ the probability measure on $\mathbb{S}^1$ uniformly supported on $u_0,u_1,...,u_{q-1} \mod 1$.

\begin{lemma} \label{l:on1}
For any integer $q \geq 1$,
	\begin{equation}
	    d_1(\mu^*,\lambda) \leq \int_{\mathcal{X}} d_1(\xi^*_q(u),\lambda) d\mu(u). \label{r:claim0}
	\end{equation}
\end{lemma}

\begin{proof} We prove it by approximating $\mu$ with a sequence of measures supported on finite sets, proving the claim for these measures, and then extending the inequality by continuity of the Wasserstein distance.
	
Recall \cite[Theorem 6.3]{Parathsarathy:67} that on separable metric spaces, the set of probability measures uniformly supported on finite sets is dense in weak$^*$-topology. (The claim in \cite[Theorem 6.3]{Parathsarathy:67} is that the set of probability measures supported on finite sets in dense; it is easy to extend the statement to the set of probability measures {\it uniformly} supported on finite sets.) Thus on separable metric space $\mathcal{X}$, we can find a sequence of probability measures $\tilde{\mu}_n$ uniformly supported on finite sets $\lbrace u^{1,n},....,u^{n,n}\rbrace \subset \mathcal{X}$, i.e. defined with $\tilde{\mu}_n=\frac{1}{n}\sum_{k=1}^n\delta_{u^{k,n}}$, where $\delta_{w}$ is the Dirac measure; such that $\tilde{\mu}_n$ converges in weak$^*$-topology to $\mu$. Now by the definition of $\tilde{\mu}_n$,
\begin{equation}
  \frac{1}{n}\sum_{k=1}^n d_1(\xi_q^*(u^{k,n}), \lambda) = \int_{\mathcal{X}} d_1(\xi^*_q(u),\lambda) d\tilde{\mu}_n(u).
\end{equation}
Fix $q \geq 1$, and let $\mu_n$ be uniformly supported on 
$
\lbrace S^{-j}u^{k,n}, \, 1 \leq j \leq q, \, 1 \leq k \leq n \rbrace.
$
Because of $S$-invariance of $\mu$, we deduce that $\mu_n$ also converges in weak$^*$-topology to $\mu$. By continuity of $\pi$, we see that $\mu_n^* \rightarrow \mu^*$. As the $L^1$-Wasserstein distance generates the weak$^*$-topology \cite[Theorem 7.12]{Villani:08}, $\nu \mapsto d_1(\nu,\lambda)$ is weak$^*$-continuous, thus 
	\begin{equation}
	\lim_{n \rightarrow \infty} d_1(\mu_n^*,\lambda)=d_1(\mu^*,\lambda). \label{r:Wfirst}
	\end{equation} Similarly we show that $u \mapsto d_1(\xi^*_q(u),\lambda)$ is continuous as a function $\mathcal{X} \rightarrow \mathbb{R}$, thus by definition of weak$^*$-convergence,  
		\begin{equation}
		\lim_{n \rightarrow \infty} \int_{\mathcal{X}} d_1(\xi^*_q(u),\lambda) d\tilde{\mu}_n(u)= \int_{\mathcal{X}} d_1(\xi^*_q(u),\lambda) d\mu(u).
		\label{r:Wsecond}
		\end{equation}
Finally, we claim the following:
    \begin{equation}
      d_1(\mu_n^*,\lambda) \leq \frac{1}{n}\sum_{k=1}^n d_1(\xi_q^*(u^{k,n}),\lambda). \label{r:claim}
    \end{equation}	
Indeed by construction, $\mu_n^* = \frac{1}{n}\sum_{k=1}^n \xi_q^*(u^{k,n})$, thus if $\zeta_1$,...,$\zeta_n$ are couplings of $(\xi_q^*(u^{1,n}),\lambda)$,..., $(\xi_q^*(u^{n,n}),\lambda)$, then $\zeta = \frac{1}{n}\sum_{k=1}^n\zeta_k$ is a coupling of $(\mu_n^*,\lambda)$, which by definition of the $L^1$-Wasserstein distance implies (\ref{r:claim}). Combining (\ref{r:claim0}) and (\ref{r:claim}) we obtain  
	$$
	d_1(\mu_n^*,\lambda) \leq \int_{\mathcal{X}} d_1(\xi^*_q(u),\lambda) d\tilde{\mu}_n(u),
	$$
which in the limit $n \rightarrow \infty$ gives the claim by (\ref{r:Wfirst}) and (\ref{r:Wsecond}).
\end{proof}

\begin{proof}[Proof of Proposition \ref{p:third}]  For $u \in \mathcal{X}$, an integer $q \geq 1$ and $x \in \mathbb{R}$, we define $\nu^*_{x,q}(u)$ to be the probability measure on $\mathbb{S}^1$, uniformly supported on the set
\begin{align*}
  u_{(q-1)/2}+ k x \mod 1, & \quad k=-\frac{q-1}{2},...,\frac{q-1}{2},   \hspace{5ex} q \text{ odd}, \\
  \frac{1}{2}\left( u_{q/2-1}+u_{q/2} \right) + \left(k+\frac{1}{2}\right)x \mod 1, & \quad  k=-\frac{q}{2},...,\frac{q}{2}-1, \hspace{9ex} q \text{ even}
\end{align*} (the somewhat more involved construction of $\nu^*_{x,q}(u)$ in the even case is required to obtain the same bound as in the odd case). Choose any integer $p \neq 0$ relatively prime with $q$.
We will estimate $d_1(\nu^*_{p/q,q}(u),\lambda)$, $d_1(\nu^*_{\rho,q}(u),\nu^*_{p/q,q}(u))$, $d_1(\xi^*_q(u),\nu^*_{\rho,q}(u))$ for some $u \in \mathcal{X}$ and $\rho = \hat{\rho}(\mu)$, then apply the triangle inequality and Lemma \ref{l:on1}. 

We first establish that
\begin{equation}
d_1\left( \nu^*_{p/q,q}(u),\lambda \right) \leq \frac{1}{4q}. \label{r:d1a}
\end{equation}
Indeed, denote the support of $\nu^*_{p/q,q}$ by $\tilde{u}_0,...,\tilde{u}_{q-1} \in \mathbb{S}^1$, and let $\lambda_{[a,b]}$ be the Lebesgue measure on $[a,b] \subset \mathbb{S}^1$. Consider the coupling $\zeta$ of $(\nu^*_{p/q,q},\lambda)$ given with
$$
\zeta = \sum_{k=0}^{q-1} \delta_{\tilde{u}_k} \times \lambda_{[\tilde{u}_k-1/(2q),\tilde{u}_k+1/(2q)]}.
$$
It is easy to check that $\int_{\mathbb{S}^1 \times \mathbb{S}^1} |x-y| d\zeta(x,y) = 1/(4q),$ which gives (\ref{r:d1a}).

For odd $q$ we can deduce	
	\begin{equation}
		d_1(\nu^*_{\rho,q}(u), \nu^*_{p/q,q}(u))  \leq \frac{1}{q} \sum_{k=-(q-1)/2}^{(q-1)/2} |k| \, |p/q-\rho| \leq \frac{q^2-1}{4q}|p/q-\rho| \leq \frac{q}{4}|p/q-\rho|.  \label{r:d1b}
	\end{equation}		
We can see that as follows: note that $\nu^*_{\rho,q}(u)$ and $\nu^*_{p/q,q}(u)$ are measures uniformly supported on sets with $q$ elements. We now obtain the first inequality in (\ref{r:d1b}) by constructing a coupling by pairing $u_{(q-1)/2}+ k x$ with the same $k$ for $x= \rho$ and $x=p/q$ respectively. Similarly we obtain the same bound for even $q$.

Let 
	$$ \Delta_q(u)=\left(\frac{1}{q} \sum_{k=0}^{q-1}(u_{k+1}-u_k-\rho)^2 \right)^{1/2}.$$
Consider first odd $q$. Then we obtain an upper bound on $d_1(\xi^*_q(u),\nu^*_{\rho,q}(u))$ by considering the coupling concentrated on 
$$\left\lbrace \left(u_{k+(q-1)/2} \mod 1,u_{(q-1)/2}+k\rho  \mod 1 \right), \, k=-\frac{q-1}{2},...,\frac{q-1}{2} \right\rbrace $$
with equal weights $1/q$. Thus by telescoping and then applying the Cauchy-Schwartz inequality,
	\begin{align}
	d_1(\xi^*_q(u),\nu^*_{\rho,q}(u))  & \leq \frac{1}{q} \sum_{k=-(q-1)/2}^{(q-1)/2} \left|u_{k+(q-1)/2}-u_{(q-1)/2}-k\rho  \right| \notag \\
	& \leq \frac{1}{q} \sum_{k=-(q-1)/2}^{-1} \left| (q-1)/2+1 + k \right|  
		 \left| u_{(q-1)/2+k+1}-u_{(q-1)/2+k} - \rho \right|  \notag \\
	& \quad + \frac{1}{q} \sum_{k=1}^{(q-1)/2} \left|(q-1)/2+1 - k \right| \, \left| u_{(q-1)/2+k}-u_{(q-1)/2+k-1} - \rho \right| \notag \\
	& \leq \frac{1}{q} \left( 2 \sum_{k=1}^{(q-1)/2}k^2 \right)^{1/2} (q-1)^{1/2} \Delta_{q-1}(u) \leq \left( \frac{(q^2-1)(q-1)}{12q} \right)^{1/2} \Delta_{q-1}(u) \nonumber \\
	& \leq \frac{q}{\sqrt{12}} \Delta_{q-1}(u). \label{r:d1c}
	\end{align}
Analogously for even $q$, we obtain the same conclusion (details are routine, thus omitted):
\begin{equation*}
	d_1(\xi^*_q(u),\nu^*_{\rho,q}(u)) \leq \left( \frac{(q^2+2)(q-1)}{12q} \right)^{1/2} \Delta_{q-1}(u) \leq \frac{q}{\sqrt{12}} \Delta_{q-1}(u). 
\end{equation*}
We sum (\ref{r:d1a}), (\ref{r:d1b}), (\ref{r:d1c}) and apply the triangle inequality, thus
\begin{equation}
d_1(\xi^*_q(u),\lambda) \leq \frac{1}{4q} + \frac{q}{4}|p/q-\rho| + \frac{q}{\sqrt{12}} \Delta_{q-1}(u). \label{r:d1d}
\end{equation}
By the inequality between the arithmetic and quadratic mean applied to $u \mapsto \Delta_{q-1}(u)$, and by $S$-invariance of $\mu$, we get
\begin{align}
\int_{\mathcal{X}} \Delta_{q-1}(u) d\mu(u) & \leq \left( \int_{\mathcal{X}} \Delta_{q-1}(u)^2 d\mu(u) \right)^{1/2} 
 = \left( \frac{1}{q-1}\int_{\mathcal{X}} \sum_{k=0}^{q-2}(u_{k+1}-u_k-\rho)^2 d\mu(u) \right)^{1/2} \notag \\
& = \left(\int_{\mathcal{X}} (u_1-u_0-\rho)^2 d\mu(u) \right) = \hat{\Delta}(\mu). \label{r:d1e}
\end{align}
It suffices now to insert (\ref{r:d1d}) and (\ref{r:d1e}) in Lemma \ref{l:on1} to obtain
\begin{equation}
 d_1(\mu^*,\lambda) \leq \frac{1}{4q} + \frac{q}{4}|p/q-\rho| + \frac{q}{\sqrt{12}} \hat{\Delta}(\mu). \label{r:lastlast}
\end{equation}
It remains to show that $ d_1(\mu^*,\lambda) \leq 1/4$. This, however, holds for any probability measure $\mu^*$ on $\mathbb{S}^1$, where the upper bound is obtained for the coupling $\zeta=\mu^* \times \lambda$.
\end{proof}

\section{Proof of Corollary \ref{c:c}} \label{s:last}

We now prove Corollary \ref{c:c}. Consider first rational mean spacings $\rho = p_0/q_0$, $p_0,q_0$ relatively prime, $q_0 \geq 1$. Then from (\ref{d:d}) one can deduce that 
\begin{equation}
\lim_{\tau \rightarrow \infty} d(p_0/q_0,\tau) = \frac{1}{4q_0}. \label{r:limlim}
\end{equation}
Indeed, it is a simple number-theoretical argument that $d(p_0/q_0,\tau) \geq 1/(4q_0)$. The equality in (\ref{r:limlim}) is then obtained by taking $p/q = p_0/q_0$ in (\ref{d:d}). Inserting (\ref{r:limlim}) in (\ref{r:mainpotential}), and noting that the last term in (\ref{r:mainpotential}) is $0$ for $\tau$ large enough, completes (i).

For the remaining claims, we use the lower bound (\ref{r:mainpotential}) with $\tilde{d}(\rho,\tau)$ inserted in (\ref{r:mainpotential}) instead of $d(\rho,\tau)$, in the light of Remark \ref{r:tilded}. Consider irrational $\rho$, and its associated sequence of convergents $p_n/q_n$, $p_n,q_n$ relatively prime, $q_n \geq 1$. From (\ref{r:td}) one easily sees that there exists an increasing sequence of $\tau_n$ such that for $\tau \geq \tau_n$ we have $\tilde{d}(\rho,\tau) \leq 1 / q_n$. Inserting it in (\ref{r:mainpotential}), we get that for $n$ large enough and $\tau \geq \tau_n$, the right-hand side of (\ref{r:mainpotential}) is $>0$, thus transport exists.

We first simplify the expression for $\tilde{d}(\rho,\tau)$, using $c$ as in Corollary \ref{c:c}, noting that
\begin{equation}
\frac{1}{2q_n} + \frac{q_n}{\sqrt{12}}\cdot \frac{\delta^+}{\left( 2(\delta^-)^3\tau+\delta^+ \delta^- \right)^{1/2}} = \frac{1}{2q_n} + \frac{q_n \, c^4}{2}\tau^{-1/2} + q_n\mathcal{O}\left(\tau^{-3/2}\right). \label{r:asym1}
\end{equation}

Inserting now $\tau_n = c^8 q_n^4$, we see that
$$
\tilde{d}(\rho,\tau_n) = \frac{1}{q_n} + \mathcal{O}(\tau_n^{-1}) = c^2 \tau_n^{-1/4} + \mathcal{O}\left(\tau_n^{-5/4}\right),
$$ 
thus
\begin{equation*}
\tilde{d}(\rho,\tau_n)^{1/2} = c \tau_n^{-1/8} + \mathcal{O}\left(\tau_n^{-9/8}\right). 
\end{equation*}
Inserting it and $\tilde{d}
(\rho,\tau_n)=\mathcal{O}\left(\tau_n^{-1/4}  \right)$ into (\ref{r:mainpotential}) we obtain
\begin{equation}
v(\rho,\tau_n) \geq \frac{\alpha}{2}\tau_n^{-1} - c \tau_n^{-9/8} + \mathcal{O}\left(\tau_n^{-5/4}\right), \label{r:rhotau}
\end{equation}
which completes (ii).

To prove (iii), recall the Khinchin-L\'{e}vy Theorem \cite{Khinchin97}, which establishes that there is a set $R \subset \mathbb{R}$ of full Lebesgue measure such that for $\rho \in R$, the sequence of denominators $q_n$ of convergents of $\rho$ satisfies
\begin{equation}
\lim_{n \rightarrow \infty} q_n^{1/n} = \gamma_L,
\label{r:khle}
\end{equation} 
where
$ \gamma_L=\exp (\pi^2 / (12 \ln 2))$ is the L\'{e}vy's constant. Thus for any $\varepsilon > 0$ and $n \geq n(\varepsilon)$ for some sufficiently large $n(\varepsilon)$ (also depending on $\rho$), $(1-\varepsilon) \gamma_L^n \leq q_n \leq (1+\varepsilon) \gamma_L^n$. For $\tau$ large enough, find $n$ so that
$$
(1 + \varepsilon) \gamma_L^n \leq c^{-2}\tau^{1/4} \leq (1 + \varepsilon) \gamma_L^{n+1},
$$
and then 
$$
  c^{-2}\tau^{1/4} \frac{1-\varepsilon}{\gamma_L(1+\varepsilon)} \leq q_n \leq   c^{-2} \tau^{1/4}.
$$
Inserting that in the definition of $d(\rho,\tau)$, we obtain
\begin{align*}
d(\rho,\tau)^{1/2} & \leq c \left( \frac{\gamma_L(1 + \varepsilon)}{2(1-\varepsilon)} + \frac{1}{2} \right)^{1/2}\tau^{-1/8} + \mathcal{O}\left(\tau^{-9/8} \right) 
\\
& = c \left( \frac{\gamma_L+1}{2} \right)^{1/2}(1+{\varepsilon}) \tau^{-1/8}  + \mathcal{O}\left(\tau^{-9/8}\right),
\end{align*}
where in the second row we redefined $\varepsilon$ without changing the notation. It suffices now to insert that and $d(\rho,\tau_n)=\mathcal{O}\left(\tau_n^{-1/4}  \right)$ into (\ref{r:mainpotential}) complete (iii).

\begin{remark} \label{r:last}
We comment on estimating the error term in (\ref{r:expansion}). As follows from the calculation above, the required missing information is how quickly the Khinchin-L\'{e}vy limes (\ref{r:khle}) converges. 

As none of the results in the paper depend on the integer part of $\rho$, it suffices to consider $\tilde{R}=R \cap [0,1]$ and mean spacings $\rho \in \tilde{R}$. Then by a variant of the Central Limit Theorem as e.g. in \cite{Misyavichyus83}, one can find an increasing sequence of sets $R_n$, $\cup_{n \in \mathbb{N}} R_n = \tilde{R}$ with their Lebesgue measures approaching $1$, and explicit estimates of $|q_n^{1/n}-\gamma_L|$ for all $\rho \in R_n$ and its series of convergents $p_n/q_n$. This would lead to an explicit estimate of the constant next to the error term in (\ref{r:expansion}) for all $\rho \in R_n$; we omit the details.
\end{remark}

\section{Discussion}

As noted in Example \ref{e}, the lower bound (\ref{r:mainpotential}) is inefficient and seems rather far from optimal. The main loss in our approach and opportunity for improvement seems to be in estimates for the on-phase, and in particular in the bound on the interaction force derived solely from the inequality $|u_1-2u_0+u_{-1}| \leq 1$ at the beginning of the proof of Lemma \ref{l:on3}. This bound follows from rather general order-preserving property of the dynamics only, and can likely be improved by e.g. complementing it with energy estimates containing further information on the dynamics.

Related to that, the results extend to second order dynamics (i.e. with inertial effects), as long as the dynamics remains overdamped in the sense of \cite{Baesens:04}, thus the order-preserving property holds. Furthermore, they are very likely stable for perturbations of piecewise constant pulses belonging to a larger class of pulses, with $v(\rho)$ likely varying continuously, as shown in \cite{Qin:18} for forced systems of the same type.

The measure-theoretical approach seems to be a promising tool to address analogous or similar questions for higher dimensional chains, for chains with non-convex interactions, and with inertial dynamics with small or no damping. As in all these cases the order preserving property of the dynamics fails, the critical remaining work is to establish bounds in the on-phase by energy estimates, or by another technique.

\section*{Acknowledgement}

We thank A. Dujella for help with facts related to continued fractions, Wen Xin-Qin for inspiring discussions, and anonymous referees for insightful comments and suggestions. This work was supported by the Croatian Science Foundation grant No IP-2014-09-2285.

\end{document}